\documentclass{amsart}
\usepackage{enumerate}

%
%
 \newtheorem{thm}{Theorem}[section]
 \newtheorem{cor}[thm]{Corollary}
 \newtheorem{lem}[thm]{Lemma}
 
 \theoremstyle{definition}
 
 \theoremstyle{remark}

 \numberwithin{equation}{section}

\newcommand{\HH}{\mathcal{H}}
\newcommand{\VV}{\mathcal{V}}

\newcommand{\N}{\mathbb{N}}
\newcommand{\F}{\mathcal{F}}
\newcommand{\U}{\mathcal{U}}
\newcommand{\LL}{\mathcal{L}}
\newcommand{\X}{\mathfrak{X}}

\newcommand{\n}{\nabla}
\newcommand{\nn}{\widetilde{\n}}
\newcommand{\tD}{\widetilde{D}}
\newcommand{\tT}{\widetilde{T}}
\newcommand{\f}{\varphi}
\newcommand{\tg}{\widetilde{g}}
\newcommand{\tn}{\widetilde\nabla}
\newcommand{\tR}{\widetilde{R}}
\newcommand{\tF}{\widetilde{F}}
\newcommand{\tS}{\widetilde{S}}
\newcommand{\tQ}{\widetilde{Q}}

\newcommand{\al}{\alpha}

\newcommand{\ta}{\theta}
\newcommand{\om}{\omega}

\newcommand{\D}{\mathrm{d}}
\newcommand{\Span}{\mathrm{span}}
\newcommand{\Id}{\mathrm{Id}}
\newcommand{\tr}{\mathrm{tr}}
\newcommand{\Div}{\mathrm{div}}
\newcommand{\Alt}{\mathrm{Alt}}
\newcommand{\sx}{\mathop{\mathfrak{S}}\limits_{x,y,z}}

\newcommand{\thmref}[1]{Theorem~\ref{#1}}

\newcommand{\lemref}[1]{Lemma~\ref{#1}}
\newcommand{\corref}[1]{Corollary~\ref{#1}}

\begin{document}

\title[Pair of associated Schouten-van Kampen connections]
{Pair of associated Schouten-van Kampen connections adapted to an almost
contact B-metric structure}

\author[M. Manev]{Mancho Manev}

\address{%
  Department of Algebra and Geometry \\
  Faculty of Mathematics and Informatics \\
  University of Plovdiv  \\
  236, Bulgaria Blvd. \\
  Plovdiv 4000, Bulgaria
}

\email{mmanev@uni-plovdiv.bg}


\subjclass{Primary 53C25; Secondary 53C07, 53C50, 53D15.}

\keywords{Distribution, Schouten-van Kampen affine connection, almost
contact B-metric manifold, contact distribution, Norden metric, indefinite metric}


\begin{abstract}
There are introduced and studied a pair of associated Schouten-van Kampen affine connections
adapted to the contact distribution and an almost contact B-metric structure generated by
the pair of associated B-metrics and their Levi-Civita connections.
By means of the constructed non-symmetric connections,
the basic classes of almost contact B-metric manifolds are characterized.
Curvature properties of the considered connections are obtained.
\end{abstract}

\maketitle

\section{Introduction}

In differential geometry of manifolds with additional tensor structures there are studied those affine connections which preserve the structure tensors and the metric, known also as natural connections on the considered manifolds.

We are interested in almost contact B-metric manifolds introduced in \cite{GaMiGr}. The geometry of three their natural connections are studied in \cite{ManGri2,Man4,Man31,ManIv38,ManIv36,Man48}.

The Schouten-van Kampen connection has been introduced for a studying of non-holonomic manifolds.
It preserves by parallelism a pair of complementary dis\-tri\-bu\-tions on a differentiable manifold
endowed with an affine connection \cite{SvK,Ia,BejFarr}. It is also used for investigations of
hyperdistributions in Riemannian manifolds (e.g., \cite{Sol}).

On the other hand, any almost contact manifold admits a hyperdistribution, the known contact distribution.
In \cite{Olsz}, it is studied the Schouten-van
Kampen connection adapted to an almost (para)contact metric structure. On these manifolds, the studied connection is not natural in general because it preserves the structure tensors except the structure endomorphism.

A counterpart of the almost contact metric structure is the almost contact B-metric structure. The B-metric (unlike the compatible metric) restricted on the contact distribution is a Norden metric, i.e. the structure endomorphism acts as an antiisometry (cf. an isometry for the compatible metric) on the contact distribution.
Other important characteristic of almost contact B-metric structure which differs it from the metric one is that the associated (0,2)-tensor of the B-metric is also a B-metric. This pair of B-metrics generates a pair of Levi-Civita connections.

In the present paper, our goal is introducing and investigation of a pair of Schou\-ten-van Kampen connections
which are associated to the pair of Levi-Civita connections and adapted to the contact distribution of an almost contact B-metric manifold. Then, we characterize the classes of considered manifolds using these connections and obtain some corresponding curvature properties.

\section{Almost Contact B-Metric Manifolds}

Let us consider an \emph{almost contact B-metric manifold} denoted by $(M,\f,\xi,\eta,g)$. This means that $M$
is a $(2n+1)$-dimensional ($n\in\N$) differentiable manifold with an almost
contact structure $(\f,\xi,\eta)$, where $\f$ is an endomorphism
of the tangent bundle $TM$, $\xi$ is a Reeb vector field and $\eta$ is its dual contact 1-form. Moreover,
$M$ is equipped with a pseu\-do-Rie\-mannian
metric $g$  of signature $(n+1,n)$, such that the following
algebraic relations are satisfied: \cite{GaMiGr}
\begin{equation*}\label{strM}
\begin{array}{c}
\f\xi = 0,\qquad \f^2 = -\Id + \eta \otimes \xi,\qquad
\eta\circ\f=0,\qquad \eta(\xi)=1,\\
g(\f x, \f y) = - g(x,y) + \eta(x)\eta(y),
\end{array}
\end{equation*}
where $\Id$ is the identity transformation. In the latter equality and further, $x$, $y$, $z$, $w$ will stand for arbitrary elements of $\X(M)$, the Lie algebra of tangent vector fields, or vectors in the tangent space $T_pM$ of $M$ at an arbitrary
point $p$ in $M$.

A classification of almost contact B-metric manifolds, which contains eleven basic classes $\F_1$, $\F_2$, $\dots$, $\F_{11}$, is given in
\cite{GaMiGr}. This classification is made with respect
to the tensor $F$ of type (0,3) defined by
\begin{equation*}\label{F=nfi}
F(x,y,z)=g\bigl( \left( \nabla_x \f \right)y,z\bigr),
\end{equation*}
where $\n$ is the Levi-Civita connection of $g$.
The following identities are valid:
\begin{equation}\label{F-prop}
\begin{array}{l}
F(x,y,z)=F(x,z,y)=F(x,\f y,\f z)+\eta(y)F(x,\xi,z)
+\eta(z)F(x,y,\xi),\\
F(x,\f y, \xi)=(\n_x\eta)y=g(\n_x\xi,y).
\end{array}
\end{equation}

The special class $\F_0$,
determined by the condition $F=0$, is the intersection of the basic classes and it is known as the
class of the \emph{cosymplectic B-metric manifolds}.

Let $\left\{e_i;\xi\right\}$ $(i=1,2,\dots,2n)$ be a basis of
$T_pM$ and $\left(g^{ij}\right)$ be the inverse matrix of the
matrix $\left(g_{ij}\right)$ of $g$. Then the following 1-forms
are associated with $F$:
\begin{equation*}\label{t}
\theta(z)=g^{ij}F(e_i,e_j,z),\quad
\theta^*(z)=g^{ij}F(e_i,\f e_j,z), \quad \omega(z)=F(\xi,\xi,z).
\end{equation*}
These 1-forms are known also as the Lee forms of the considered manifold. Obviously, the identities
$\om(\xi)=0$ and $\ta^*\circ\f=-\ta\circ\f^2$ are always valid.

Further, we use the following characteristic conditions of the
basic classes: \cite{Man8}
\begin{subequations}\label{Fi}
\begin{equation}
\begin{array}{rl}
\F_{1}: &F(x,y,z)=\frac{1}{2n}\bigl\{g(x,\f y)\ta(\f z)+g(\f x,\f
y)\ta(\f^2 z)\\
&\phantom{F(x,y,z)=\frac{1}{2n}\bigl\{}
+g(x,\f z)\ta(\f y)+g(\f x,\f z)\ta(\f^2 y)
\bigr\};\\
\F_{2}: &F(\xi,y,z)=F(x,\xi,z)=0,\quad
              \sx F(x,y,\f z)=0,\quad \ta=0;\\
\F_{3}: &F(\xi,y,z)=F(x,\xi,z)=0,\quad
              \sx F(x,y,z)=0;\\
\F_{4}: &F(x,y,z)=-\frac{1}{2n}\ta(\xi)\bigl\{g(\f x,\f y)\eta(z)+g(\f x,\f z)\eta(y)\bigr\};\\
\F_{5}: &F(x,y,z)=-\frac{1}{2n}\ta^*(\xi)\bigl\{g( x,\f y)\eta(z)+g(x,\f z)\eta(y)\bigr\};\\
\F_{6}: &F(x,y,z)=F(x,y,\xi)\eta(z)+F(x,z,\xi)\eta(y),\quad \\
                &F(x,y,\xi)=F(y,x,\xi)=-F(\f x,\f y,\xi),\quad \ta=\ta^*=0; \\
\F_{7}: &F(x,y,z)=F(x,y,\xi)\eta(z)+F(x,z,\xi)\eta(y),\quad \\
         &       F(x,y,\xi)=-F(y,x,\xi)=-F(\f x,\f y,\xi); \\
\F_{8}: &F(x,y,z)=F(x,y,\xi)\eta(z)+F(x,z,\xi)\eta(y),\quad \\
         &       F(x,y,\xi)= F(y,x,\xi)=F(\f x,\f y,\xi); \\
\F_{9}: &F(x,y,z)=F(x,y,\xi)\eta(z)+F(x,z,\xi)\eta(y),\quad \\
         &       F(x,y,\xi)=-F(y,x,\xi)=F(\f x,\f y,\xi); \\
\end{array}
\end{equation}
\begin{equation}
\begin{array}{rl}
\F_{10}: &F(x,y,z)=F(\xi,\f y,\f z)\eta(x); \\
\F_{11}:
&F(x,y,z)=\eta(x)\left\{\eta(y)\om(z)+\eta(z)\om(y)\right\}.
\end{array}
\end{equation}
\end{subequations}

Using \eqref{F-prop} and taking the traces with respect to $g$ denoted by $\tr$ and the traces with respect to $\tg$ denoted by $\tr^*$, we obtain the following relations
\begin{equation}\label{divtr}
\ta(\xi)=\Div^*(\eta),\qquad \ta^*(\xi)=\Div(\eta),
\end{equation}
where $\Div$ and $\Div^*$ denote the divergence using a trace by $g$ and by $\tg$, respectively.

As a corollary, the covariant derivative of $\xi$ with respect to $\n$ and the dual covariant derivative of $\eta$ because of $(\n_x\eta)(y)=g(\n_x\xi,y)$ are determined in each class as follows
\begin{equation}\label{Fi:nxi}
\begin{array}{l}
\F_{1}:\; \n\xi=0;\qquad\qquad
\F_{2}:\; \n\xi=0;\qquad\qquad
\F_{3}:\; \n\xi=0;\\
\F_{4}:\; \n\xi=\frac{1}{2n}\Div^*(\eta)\,\f;\qquad\qquad
\F_{5}:\; \n\xi=-\frac{1}{2n}\Div(\eta)\,\f^2;\\
%
\F_{6}:\;  g(\n_x\xi,y)=g(\n_y\xi,x)=-g(\n_{\f x}\xi,\f y),\quad \Div(\eta)=\Div^*(\eta)=0; \\
\F_{7}:\; g(\n_x\xi,y)=-g(\n_y\xi,x)=-g(\n_{\f x}\xi,\f y); \\
\F_{8}:\; g(\n_x\xi,y)=-g(\n_y\xi,x)=g(\n_{\f x}\xi,\f y); \\
\F_{9}:\; g(\n_x\xi,y)=g(\n_y\xi,x)=g(\n_{\f x}\xi,\f y); \\
\F_{10}:\; \n\xi=0; \qquad\qquad
\F_{11}:\; \n\xi=\eta\otimes\f\om^{\sharp},
\end{array}
\end{equation}
where $\sharp$ denotes the musical isomorphism of $T^*M$ in $TM$ given by $g$.

The associated metric $\tg$ of $g$ on $M$ is defined by
\(\tg(x,y)=g(x,\f y)+\eta(x)\eta(y)\).  The manifold
$(M,\f,\xi,\eta,\tg)$ is also an almost contact B-metric manifold.
The B-metric $\tg$ is also of signature $(n+1,n)$.
The Levi-Civita connection of $\tg$ is denoted by
$\nn$.
Let us denote the potential of $\nn$ regarding $\n$ by $\Phi$, i.e. $\Phi(x,y)=\nn_x y - \n_x y$. In \cite{NakGri93}, it is given a characterization of all basic classes in terms of $\Phi$ by means of
the relations between $F$ and $\Phi$ known from \cite{GaMiGr}
\begin{gather}
\begin{array}{l}\label{FPhi}
F(x,y,z)=\Phi(x,y,\f z)+\Phi(x,z,\f y)\\
\phantom{F(x,y,z)=}
+\frac12\eta(z)\{\Phi(x,y,\xi)-\Phi(x,\f y,\xi)+\Phi(\xi,x,y)-\Phi(\xi,x,\f y)\}\\
\phantom{F(x,y,z)=}
+\frac12\eta(y)\{\Phi(x,z,\xi)-\Phi(x,\f z,\xi)+\Phi(\xi,x,z)-\Phi(\xi,x,\f z)\},
\end{array}
\\
\begin{array}{l}\label{PhiF}
2\Phi(x,y,z)=-F(x,y,\f z)-F(y,x,\f z)+F(\f z,x,y)\\
\phantom{2\Phi(x,y,z)=}
+\eta(x)\{F(y,z,\xi)+F(\f z,\f y,\xi)\}\\
\phantom{2\Phi(x,y,z)=}
+\eta(y)\{F(x,z,\xi)+F(\f z,\f x,\xi)\}\\
\phantom{2\Phi(x,y,z)=}
+\eta(z)\{-F(\xi,x,y)+F(x,y,\xi)+F(x,\f y,\xi)-\omega(\f x)\eta(y)\\
\phantom{2\Phi(x,y,z)=+\eta(z)\{-F(\xi,x,y)\,}
+F(y,x,\xi)+F(y,\f x,\xi)-\omega(\f y)\eta(x)\}.
\end{array}
\end{gather}
In \cite{dissMan}, it is given the relation between $F$ and $\tF(x,y,z)=\tg(( \nn_x \f )y,z)$ as follows
\begin{equation}\label{tFF}
\begin{array}{l}
  2\tF(x,y,z)=F(\f y,z,x)-F(y,\f z,x)+F(\f z,y,x)-F(z,\f y,x)\\
  \phantom{2\tF(x,y,z)=}
  +\eta(x)\{F(y,z,\xi)+F(\f z,\f y,\xi)+F(z,y,\xi)+F(\f y,\f z,\xi)\}\\
  \phantom{2\tF(x,y,z)=}
  +\eta(y)\{F(x,z,\xi)+F(\f z,\f x,\xi)+F(x,\f z,\xi)\}\\
  \phantom{2\tF(x,y,z)=}
  +\eta(z)\{F(x,y,\xi)+F(\f y,\f x,\xi)+F(x,\f y,\xi)\}.
\end{array}
\end{equation}
Obviously, the special class $\F_0$ is determined by the following equivalent conditions: $F=0$, $\Phi=0$, $\tF=0$ and $\n=\nn$.

The properties of $\nn_x\xi$ when $(M,\f,\xi,\eta,\tg)$ is in each of the basic classes are determined in a similar way as in \eqref{Fi:nxi}.

\section{Remarkable metric connections regarding the contact distribution on the considered manifolds}

Let us consider an arbitrary almost contact B-metric manifold $(M,\f,\xi,\eta,g)$. 
%
Using the Reeb vector field $\xi$ and its dual contact 1-form $\eta$ on $M$, we determine two distributions
in the tangent bundle $TM$ of $M$ as follows
\begin{equation*}\label{HV}
  \HH=\ker(\eta),\qquad \VV=\Span(\xi).
\end{equation*}
Then the horizontal distribution $\HH$ and the vertical distribution $\VV$ form a pair of mutually complementary distributions in $TM$ which are orthogonal with respect to both of the metrics $g$ and $\tg$, i.e. $\HH\oplus\VV =TM$, $\HH\cap\VV=\{o\}$ (where $o$ is the zero vector field) and $\HH\bot\VV$. The distribution $\HH$ is known also as the contact distribution.

Let us consider the corresponding horizontal and vertical projectors $h:TM\mapsto\HH$ and $v:TM\mapsto\VV$. Bearing in mind that $x=-\f^2x+\eta(x)\xi$ for an arbitrary vector $x$ in $TM$, we use denotations $x^h$ and $x^v$ for the corresponding horizontal and vertical projections of $x$ by $h$ and $v$, respectively. Then we have
$x^h=-\f^2x$ and $x^v=\eta(x)\xi$
or equivalently
\begin{equation}\label{Xhv}
  x^h=x-\eta(x)\xi,\qquad x^v=\eta(x)\xi.
\end{equation}

\subsection{The Schouten-van Kampen connection $D$ associated to $\n$}

Let us consider the Schouten-van Kampen connection $D$ associated to $\n$
and adapt\-ed to the pair $(\HH, \VV)$. This connection is defined (locally in \cite{SvK}, see also \cite{Ia}) by
\begin{equation}\label{SvK}
  D_x y = (\n_x y^h)^h + (\n_x y^v)^v.
\end{equation}
The latter equality implies the parallelism of $\HH$ and $\VV$ with
respect to $D$.
From \eqref{Xhv} we obtain
\[
\begin{array}{l}
(\n_x y^h)^h=\n_x y -\eta(y)\n_x \xi-\eta(\n_x y)\xi, \\
(\n_x y^v)^v=\eta(\n_x y)\xi+(\n_x \eta)(y)\xi.
\end{array}
\]
Then we get the expression of the Schouten-van Kampen connection in terms of $\n$ as follows (cf. \cite{Sol})
\begin{equation}\label{SvK=n}
  D_x y = \n_x y -\eta(y)\n_x \xi+(\n_x \eta)(y)\xi.
\end{equation}

According to \eqref{SvK=n}, the potential $Q$ of $D$ with respect to $\n$ and the torsion $T$ of $D$,  defined by $Q(x,y)=D_xy-\n_xy$ and $T(x,y)= D_xy-D_yx-[x,y]$, respectively, have the following form
\begin{gather}\label{Q}
Q(x,y)=-\eta(y)\n_x \xi+(\n_x \eta)(y)\xi,
\\
\label{T}
T(x,y)=\eta(x)\n_y \xi-\eta(y)\n_x \xi+\D\eta(x,y)\xi.
\end{gather}

\begin{thm}\label{thm:D-T}
The Schouten-van Kampen connection $D$ 
 is the unique affine connection having a torsion of the form \eqref{T} and preserving the structures $\xi$, $\eta$ and the metric $g$.
\end{thm}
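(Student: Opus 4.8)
The plan is to establish the statement in two independent halves: that $D$ itself possesses all four listed properties (so that a connection of the required type exists), and that those properties determine a connection uniquely. For the first half I would read off the torsion from \eqref{T}, which is already in the prescribed shape, and then verify the three parallelism conditions directly from \eqref{SvK=n}. Putting $y=\xi$ and using $\eta(\xi)=1$ together with $g(\n_x\xi,\xi)=\frac12 x\bigl(g(\xi,\xi)\bigr)=0$ yields $D_x\xi=0$. For the one-form, computing $\eta(D_x y)$ from \eqref{SvK=n} and inserting $(\n_x\eta)(y)=x\bigl(\eta(y)\bigr)-\eta(\n_x y)$ collapses everything to $\eta(D_x y)=x\bigl(\eta(y)\bigr)$, i.e.\ $D\eta=0$ (this also follows from $D\xi=0$, $Dg=0$ and $\eta=g(\cdot,\xi)$). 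For metric compatibility I would lower an index in \eqref{SvK=n} by $g$ and use $(\n_x\eta)(y)=g(\n_x\xi,y)$ from \eqref{F-prop}; the two $\n\xi$-correction terms in $g(D_x y,z)+g(y,D_x z)$ then cancel in pairs, leaving $x\bigl(g(y,z)\bigr)$ because $\n$ is metric, so $Dg=0$.

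For uniqueness, let $\bar\n$ be any affine connection whose torsion is of the form \eqref{T} and which preserves $g$; set $B=\bar\n-D$ and $b(x,y,z)=g\bigl(B(x,y),z\bigr)$. Equality of the two torsions gives $B(x,y)-B(y,x)=0$, so $b$ is symmetric in its first two arguments. Subtracting the metricity identities $\bar\n g=0$ and $Dg=0$ gives $g\bigl(B(x,y),z\bigr)+g\bigl(B(x,z),y\bigr)=0$, so $b$ is skew-symmetric in its last two arguments. A $(0,3)$-tensor that is symmetric in the first pair and skew in the second pair must vanish, by the familiar cyclic chain $b(x,y,z)=b(y,x,z)=-b(y,z,x)=\cdots=-b(x,y,z)$; hence $B=0$ and $\bar\n=D$.

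I expect the only place demanding real care to be the bookkeeping in the $Dg=0$ computation, where the structure-correction terms must be seen to annihilate one another; the conceptual core, uniqueness, reduces to this short symmetry argument. It is worth noting that the uniqueness argument uses only metric compatibility and the prescribed torsion, so the hypotheses $\bar\n\xi=0$ and $\bar\n\eta=0$ are automatically satisfied by the resulting connection rather than needed as inputs.
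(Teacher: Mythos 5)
Your proof is correct and follows essentially the same route as the paper: direct verification from \eqref{SvK=n} that $D\xi=D\eta=Dg=0$, combined with the fact that a metric connection is uniquely determined by its torsion. The only difference is cosmetic --- where the paper cites Cartan's torsion--potential bijection \eqref{TQ}--\eqref{QT}, you re-derive that uniqueness inline via the standard difference-tensor argument ($b$ symmetric in the first pair, skew in the last pair, hence zero), and your closing remark that $\bar\nabla\xi=0$, $\bar\nabla\eta=0$ are consequences rather than needed hypotheses is a valid, slightly sharper observation.
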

\begin{proof}
Taking into account \eqref{SvK=n}, we compute directly that the structures $\xi$, $\eta$ and $g$ are parallel with respect to $D$, i.e. $D\xi=D\eta=Dg=0$.
The connection $D$ preserves the metric and therefore is
completely determined by its torsion $T$.
According to  \cite{Car25}, the two spaces of all torsions and of all potentials
are isomorphic and the bijection is given as follows
\begin{gather}
T (x,y,z) = Q(x,y,z) - Q(y,x,z) ,\label{TQ}\\
2Q(x,y,z) = T (x,y,z) - T (y,z,x) + T(z,x,y).\label{QT}
\end{gather}
Then, 
the connection $D$ determined by \eqref{SvK=n} and its potential $Q$ given in \eqref{Q} are replaced in \eqref{TQ} to determine its torsion $T$ and the result is \eqref{T}. Vice versa, the form of $T$ in \eqref{T} yields by \eqref{QT} the equality for $D$ in \eqref{SvK=n}.
\end{proof}

Obviously, the connection $D$ exists on $(M,\f,\xi,\eta,g)$ in any class, but $D$ coincides with $\n$ if and only if the condition $\eta(y)\n_x \xi-(\n_x \eta)(y)\xi=0$ holds. The latter equality is equivalent to vanishing of $\n_x \xi$ for any $x$. This condition is satisfied only in the class $\F_1\oplus\F_2\oplus\F_3\oplus\F_{10}$. Let us denote this class briefly by $\U_1$. Thus, we prove the following
\begin{thm}\label{thm:D=n}
The Schouten-van Kampen connection $D$ 
coincides with $\n$ if and only if $(M,\f,\xi,\allowbreak{}\eta,g)$ belongs to the class $\U_1$.
\end{thm}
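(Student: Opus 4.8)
The plan is to unwind the statement through the potential $Q$ of $D$ and then to read off the resulting condition on $\xi$ from the class decomposition. By definition $D=\n$ precisely when $Q\equiv 0$, so by \eqref{Q} I must decide when
\[
-\eta(y)\,\n_x\xi+(\n_x\eta)(y)\,\xi=0
\]
holds for all $x,y$. First I would observe that $\n_x\xi$ is horizontal: since $\eta=g(\cdot,\xi)$ and $g(\xi,\xi)=1$ is constant, metric compatibility gives $g(\n_x\xi,\xi)=\tfrac12 x\bigl(g(\xi,\xi)\bigr)=0$, i.e. $\n_x\xi\in\HH$. Hence the two terms above lie in the complementary distributions $\HH$ and $\VV$, so their sum vanishes if and only if each term vanishes separately. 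Setting $y=\xi$ in the horizontal term (equivalently, using nondegeneracy of $g$ on the vertical term, as $(\n_x\eta)(y)=g(\n_x\xi,y)$) yields that $Q\equiv 0$ is equivalent to $\n_x\xi=0$ for every $x$, that is, to $\n\xi=0$.

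It then remains to show $\n\xi=0$ if and only if the manifold lies in $\U_1=\F_1\oplus\F_2\oplus\F_3\oplus\F_{10}$. By the second line of \eqref{F-prop}, $g(\n_x\xi,y)=F(x,\f y,\xi)$, so $\n\xi=0$ is equivalent to $F(x,\f y,\xi)=0$ for all $x,y$. Since $\f$ maps $\HH$ bijectively onto itself and $\f\xi=0$, this says $F(x,u,\xi)=0$ for all $u\in\HH$; combined with the identically valid relation $F(x,\xi,\xi)=0$ (which follows by setting $y=z=\xi$ in the first line of \eqref{F-prop} and using $\f\xi=0$), I obtain that $\n\xi=0$ is equivalent to the vanishing of the entire vertical component $F(x,y,\xi)$ for all $x,y$.

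Finally I would identify $\{F:\ F(\cdot,\cdot,\xi)=0\}$ with $\U_1$. Inspecting the characteristic conditions \eqref{Fi}, one checks directly that $F(x,y,\xi)=0$ in each of $\F_1,\F_2,\F_3,\F_{10}$, exactly as recorded by $\n\xi=0$ in \eqref{Fi:nxi}; moreover, in each of the remaining classes the defining relations are built precisely from a nonvanishing vertical part, so that $F(\cdot,\cdot,\xi)=0$ already forces $F=0$ there (for $\F_6,\dots,\F_9$ the whole tensor equals its vertical part, while for $\F_4$, $\F_5$, $\F_{11}$ it reduces to the vanishing of the scalar $\ta(\xi)$, $\ta^*(\xi)$ or of the form $\om$). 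Because $F$ decomposes additively as $F=\sum_i F_i$ with $F_i$ in $\F_i$ and the assignment $F\mapsto F(\cdot,\cdot,\xi)$ is linear, the contributions of $\F_1,\F_2,\F_3,\F_{10}$ drop out, and $\n\xi=0$ then forces $\sum F_i(\cdot,\cdot,\xi)=0$ over the complementary classes.

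The step I expect to be the main obstacle is exactly this last cancellation issue: one must rule out that the vertical components coming from $\F_4,\dots,\F_9,\F_{11}$ cancel each other, i.e. that the map $F\mapsto F(\cdot,\cdot,\xi)$ is injective on $\F_4\oplus\dots\oplus\F_9\oplus\F_{11}$, so that a vanishing sum forces each summand to vanish. This I would secure from the orthogonality of the classification decomposition together with the distinct symmetry types of the vertical parts (symmetric versus skew in $x,y$, $\f$-invariant versus $\f$-anti-invariant, and the $\eta(x)$-supported piece of $\F_{11}$), which makes these components live in genuinely independent summands. Granting this, $\n\xi=0$ is equivalent to $F_4=\dots=F_9=F_{11}=0$, i.e. to $F\in\U_1$, which completes the equivalence.
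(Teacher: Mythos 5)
Your proposal is correct and takes essentially the same route as the paper: reduce $D=\n$ to the vanishing of the potential $Q$, hence (via the $\HH\oplus\VV$ splitting) to $\n\xi=0$, and then identify $\n\xi=0$ with membership in $\U_1$ through the classification. The only difference is one of detail: the paper simply invokes the componentwise table \eqref{Fi:nxi}, while you make explicit the translation to $F(\cdot,\cdot,\xi)=0$ and the non-cancellation (injectivity) argument on $\F_4\oplus\dots\oplus\F_9\oplus\F_{11}$ that the table presupposes.
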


\subsection{The conditions $D$ to be natural for $(\f,\xi,\eta,g)$}

Using \eqref{SvK=n}, we express the covariant derivative of $\f$ as follows
\begin{equation}\label{Df}
  (D_x\f)y=(\n_x\f)y+\eta(y)\f\n_x\xi-\eta(\n_x\f y)\xi.
\end{equation}
Therefore, $D\f=0$ if and only if $(\n_x\f)y=-\eta(y)\f\n_x\xi+\eta(\n_x\f y)\xi$, which by \eqref{F-prop} yields
\begin{equation}\label{F_D=0}
  F(x,y,z)=F(x,y,\xi)\eta(z)+F(x,z,\xi)\eta(y).
\end{equation}
The latter condition determines the direct sum $\F_4\oplus\cdots\oplus\F_9\oplus\F_{11}$ which we denote by $\U_2$ for the sake of brevity. Thus, we find the kind of the considered manifolds where $D$ is a natural connection, i.e. the tensors of the structure $(\f,\xi,\eta,g)$ are covariantly constant regarding $D$.
In this case it follows that $(\n_x\f)\f y=\left(\n_x\eta\right)(y) \xi$ holds. Then the Schouten-van Kampen connection $D$ coincides with the $\f$B-con\-nect\-ion $\n^{*}$ defined by $\n^{*}_x y=\n_x y+\frac12\left\{(\n_x\f)\f y+(\n_x\eta)(y)\cdot \xi\right\}-\eta(y)\n_x \xi$.
Such a way, we establish the truthfulness of the following
\begin{thm}\label{thm:D-nat}
The Schouten-van Kampen connection $D$ 
is a natural connection for the structure $(\f,\xi,\eta,g)$ if and only if $(M,\f,\xi,\eta,g)$ belongs to the class $\U_2$. Then $D$ coincides with the $\f$B-connection.
\end{thm}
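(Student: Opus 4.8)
The plan is to use the standard characterization of a natural connection as one for which all four structure tensors $\f$, $\xi$, $\eta$, $g$ are parallel, and then to exploit the fact that three of these four conditions come for free. Indeed, by \thmref{thm:D-T} we already have $D\xi=D\eta=Dg=0$ in every class, so $D$ is natural exactly when the single remaining condition $D\f=0$ holds. The entire statement thus reduces to identifying the class of manifolds on which $D\f=0$, together with the comparison $D=\n^{*}$ on that class.

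For the reduction I would begin from the formula \eqref{Df} for $(D_x\f)y$ in terms of $\n$. Setting its right-hand side to zero yields the pointwise identity $(\n_x\f)y=-\eta(y)\f\n_x\xi+\eta(\n_x\f y)\xi$, which I would then recast as a condition on $F$ by pairing with an arbitrary $z$ through $g$ and invoking the identities \eqref{F-prop}; the computation converts the three terms into $F(x,y,\xi)\eta(z)$, $F(x,z,\xi)\eta(y)$ (using $F(x,\xi,\xi)=0$, itself a consequence of \eqref{F-prop}), and produces exactly \eqref{F_D=0}. A useful intermediate observation is that \eqref{F_D=0} is equivalent to the vanishing of $F(x,y,z)$ whenever $y$ and $z$ are horizontal: decomposing $y=y^h+\eta(y)\xi$, $z=z^h+\eta(z)\xi$, and using the symmetry $F(x,y,z)=F(x,z,y)$ together with $F(x,\xi,\xi)=0$ shows the two formulations coincide.

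The main obstacle is the class identification: matching \eqref{F_D=0} against the defining list \eqref{Fi} to confirm that it singles out precisely $\U_2=\F_4\oplus\cdots\oplus\F_9\oplus\F_{11}$. With the horizontal-vanishing reformulation this becomes transparent. The classes $\F_6,\dots,\F_9$ have \eqref{F_D=0} built directly into their definitions, and each defining expression for $\F_4$, $\F_5$, $\F_{11}$ carries an $\eta$ in the second or third slot, so $F$ vanishes on horizontal $y$, $z$ (for $\F_{11}$ one uses $F(x,y,\xi)=\eta(x)\om(y)$, which follows from $\om(\xi)=0$). Conversely I would rule out $\F_1$, $\F_2$, $\F_3$ because their characteristic tensors are nonzero on purely horizontal arguments, and $\F_{10}$ because $F(\xi,y,z)=F(\xi,\f y,\f z)$ need not vanish for horizontal $y$, $z$. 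This establishes $D\f=0\Leftrightarrow(M,\f,\xi,\eta,g)\in\U_2$.

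Finally, for the coincidence with the $\f$B-connection I would work inside $\U_2$ and first derive the auxiliary identity $(\n_x\f)\f y=(\n_x\eta)(y)\xi$: pairing $(\n_x\f)\f y$ with $z$ gives $F(x,\f y,z)$, and since $\eta(\f y)=0$ the condition \eqref{F_D=0} collapses this to $F(x,\f y,\xi)\eta(z)=(\n_x\eta)(y)\eta(z)$ by the second line of \eqref{F-prop}. Substituting this into the definition of $\n^{*}$, the two $\frac12(\n_x\eta)(y)\xi$ contributions merge into a single $(\n_x\eta)(y)\xi$, and the resulting expression is literally the right-hand side of \eqref{SvK=n}. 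Hence $D=\n^{*}$ on $\U_2$, which completes the proof.
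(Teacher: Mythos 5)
Your proposal is correct and follows essentially the same route as the paper: reduce naturality to $D\f=0$ via the parallelism $D\xi=D\eta=Dg=0$ from \thmref{thm:D-T}, translate $D\f=0$ through \eqref{Df} and \eqref{F-prop} into the condition \eqref{F_D=0}, match it against the list \eqref{Fi} to obtain $\U_2$, and then use $(\n_x\f)\f y=(\n_x\eta)(y)\xi$ to collapse the $\f$B-connection formula to \eqref{SvK=n}. The only difference is one of detail: you make explicit (via the horizontal-vanishing reformulation and the class-by-class check, and via the derivation of the auxiliary identity) steps that the paper merely asserts, which is a faithful filling-in rather than a different argument.
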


The $\f$B-connection is studied for all main classes $\F_1$, $\F_4$, $\F_5$, $\F_{11}$
in  \cite{ManGri1,ManGri2,Man3,Man4,ManIv37} with respect to  properties
of the torsion and the curvature as well as the conformal geometry. The restriction of the
$\f$B-connection on $\HH$ coincides with the B-connection
on the corresponding almost complex Norden manifold,
studied for their main class in  \cite{GaGrMi87}.

Let us remark that in the case when $(M,\f,\xi,\eta,g)$ belongs to a class which has a nonzero component in  both of the direct sums
$\U_1$ and $\U_2$, then
the connection $D$ is not a natural connection and it does not coincide with $\n$.
Then the class of all almost contact B-metric manifolds can be decomposed orthogonally to $\U_1\oplus\U_2$.

\subsection{The Schouten-van Kampen connection $\tD$ associated to $\nn$}


In a similar way as for $D$, let us consider the Schouten-van Kampen connection $\tD$ associated to $\nn$
and adapt\-ed to the pair $(\HH, \VV)$. This connection we define as follows
\begin{equation*}\label{tSvK}
  \tD_x y = (\nn_x y^h)^h + (\nn_x y^v)^v.
\end{equation*}
Then the hyperdistribution $(\HH, \VV)$ is parallel with
respect to $\tD$, too.
Analogously, we express the Schouten-van Kampen connection $\tD$ in terms of $\nn$ by
\begin{equation}\label{tSvK=n}
  \tD_x y = \nn_x y -\eta(y)\nn_x \xi+(\nn_x \eta)(y)\xi.
\end{equation}

By virtue of \eqref{tSvK=n}, the potential $\tQ$ of $\tD$ with respect to $\nn$ and the torsion $\tT$ of $\tD$ have the following form
\begin{gather}\label{tQ}
\tQ(x,y)=-\eta(y)\nn_x \xi+(\nn_x \eta)(y)\xi,
\\
\label{tT}
\tT(x,y)=\eta(x)\nn_y \xi-\eta(y)\nn_x \xi+\D\eta(x,y)\xi.
\end{gather}

Similarly to \thmref{thm:D-T} we have the following
\begin{thm}\label{thm:tD-tT}
The Schouten-van Kampen connection $\tD$ 
is the unique affine connection having a torsion of the form \eqref{tT} and preserving the structures $\xi$, $\eta$ and the associated metric $\tg$.
\end{thm}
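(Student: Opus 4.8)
The plan is to follow verbatim the argument of \thmref{thm:D-T}, replacing the triple $(g,\n,D)$ by $(\tg,\nn,\tD)$ and the tensors $(Q,T)$ by $(\tQ,\tT)$. First I would check directly from \eqref{tSvK=n} that $\tD$ preserves the structure, i.e.\ $\tD\xi=\tD\eta=\tD\tg=0$. The computations for $\xi$ and $\eta$ are immediate from the defining formula; for the metric compatibility $\tD\tg=0$ the one point that needs care is the identity $(\nn_x\eta)(y)=\tg(\nn_x\xi,y)$. This is the $\tg$-analogue of the relation $(\n_x\eta)y=g(\n_x\xi,y)$ used in \eqref{F-prop}, and it holds because the B-metric relations give $\tg(\xi,y)=\eta(y)$; differentiating this using $\nn\tg=0$ yields the required identity. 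With it, the terms in \eqref{tSvK=n} involving $\nn_x\xi$ and $(\nn_x\eta)(y)$ cancel in $\tD\tg$ exactly as in the untilded case.

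Second, since $\tD$ is compatible with $\tg$, it is completely determined by its torsion: the difference of two $\tg$-metric connections is governed by its torsion via the Koszul mechanism. Concretely, the bijection between potentials and torsions recorded in \eqref{TQ}--\eqref{QT} is a purely algebraic identity valid for the potential of any affine connection relative to a torsion-free connection. As $\nn$ is the Levi-Civita connection of $\tg$, it is symmetric, so \eqref{TQ}--\eqref{QT} apply verbatim to the pair $(\tQ,\tT)$.

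Third, I would close the loop in both directions. Substituting the explicit potential $\tQ$ from \eqref{tQ} into \eqref{TQ} produces precisely the torsion \eqref{tT}, which shows $\tD$ has the asserted torsion. Conversely, inserting the prescribed form \eqref{tT} into \eqref{QT} recovers the potential \eqref{tQ}, hence the connection \eqref{tSvK=n}; this is the uniqueness half of the statement.

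The argument carries essentially no new obstacle beyond \thmref{thm:D-T}; the only place where the B-metric nature of $\tg$ actually intervenes is the verification of $\tD\tg=0$, where one must invoke $\tg(\xi,y)=\eta(y)$ and the resulting $(\nn_x\eta)(y)=\tg(\nn_x\xi,y)$ in place of the corresponding $g$-identities. Everything else is formally identical, because \eqref{tSvK=n}, \eqref{tQ} and \eqref{tT} are obtained from \eqref{SvK=n}, \eqref{Q} and \eqref{T} by the single substitution $\n\mapsto\nn$.
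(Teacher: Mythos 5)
Your proposal is correct and takes essentially the same approach as the paper, which proves \thmref{thm:D-T} by verifying $D\xi=D\eta=Dg=0$ and invoking Cartan's bijection \eqref{TQ}--\eqref{QT}, and then states \thmref{thm:tD-tT} with the remark ``similarly'' --- precisely your substitution $(g,\n,D,Q,T)\mapsto(\tg,\nn,\tD,\tQ,\tT)$, including the needed identity $(\nn_x\eta)(y)=\tg(\nn_x\xi,y)$ from $\tg(\xi,y)=\eta(y)$. One minor imprecision: the inverse formula \eqref{QT} holds for \emph{metric} connections relative to a torsion-free one (it needs $\tQ(x,y,z)=-\tQ(x,z,y)$, i.e.\ $\tD\tg=0$), not for arbitrary affine connections as you state in passing; since you establish $\tD\tg=0$ before invoking it, the argument is unaffected.
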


It is clear that the connection $\tD$ exists on $(M,\f,\xi,\eta,\tg)$ in any class, but $\tD$ coincides with $\nn$ if and only if the condition $\eta(y)\nn_x \xi-(\nn_x \eta)(y)\xi=0$ is valid or equivalently $\nn \xi=0$. This condition holds if and only if $\widetilde{F}$ satisfies the condition \eqref{Fi} of $F$ for %
$\F_1\oplus\F_2\oplus\F_3\oplus\F_{9}$, which we denote by $\widetilde{\U}_1$. Thus, we prove the following
\begin{thm}\label{thm:tD=nn}
The Schouten-van Kampen connection $\tD$ 
coincides with $\nn$ if and only if $(M,\f,\xi,\allowbreak{}\eta,\tg)$ belongs to the class $\widetilde{\U}_1$.
\end{thm}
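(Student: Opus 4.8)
The plan is to mirror the argument that established \thmref{thm:D=n}, transferring every step from the $\n$-world to the $\nn$-world via the associated metric $\tg$ and its Levi-Civita connection. First I would note that \eqref{tSvK=n} gives $\tD_x y - \nn_x y = -\eta(y)\nn_x \xi + (\nn_x \eta)(y)\xi$, so $\tD$ coincides with $\nn$ precisely when this potential vanishes identically. Pairing the $\xi$-components and using that $(\nn_x\eta)(y)=\tg(\nn_x\xi,y)$ (the exact analogue of the second identity in \eqref{F-prop} for $\tg$), the vanishing of the potential for all $x,y$ is equivalent to $\nn_x\xi = 0$ for every $x$, i.e. to $\nn\xi = 0$.

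Next I would translate the condition $\nn\xi=0$ into a condition on $\tF$. Since $(M,\f,\xi,\eta,\tg)$ is itself an almost contact B-metric manifold, the entire classification \eqref{Fi}--\eqref{Fi:nxi} applies verbatim with $F$ replaced by $\tF$, $\n$ by $\nn$, and $g$ by $\tg$. The excerpt already records (at the close of Section~2) that the behaviour of $\nn_x\xi$ in each basic class is determined exactly as in \eqref{Fi:nxi}. Reading off that table, $\nn\xi=0$ holds in classes $\F_1$, $\F_2$, $\F_3$, $\F_{10}$ of the $\tg$-structure. The key subtlety — and the step I expect to be the main obstacle — is that the classes are labelled intrinsically by the structure, so I must express the resulting class \emph{in terms of the original $F$}, not $\tF$. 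Using the relation \eqref{tFF} between $\tF$ and $F$, one checks which conditions on $F$ force $\tF$ to lie in $\F_1\oplus\F_2\oplus\F_3\oplus\F_{10}$; the relabelling under the swap $g\leftrightarrow\tg$ is not the identity on indices, and this is precisely why the answer is $\F_1\oplus\F_2\oplus\F_3\oplus\F_9$ rather than the naive $\F_1\oplus\F_2\oplus\F_3\oplus\F_{10}$ one might expect by direct analogy with $\U_1$.

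Concretely, I would verify that the defining condition of $\widetilde{\U}_1$, namely that $\tF$ satisfies the characteristic condition \eqref{Fi} of $\F_1\oplus\F_2\oplus\F_3\oplus\F_{10}$, is equivalent via \eqref{tFF} to $F$ belonging to $\F_1\oplus\F_2\oplus\F_3\oplus\F_9$. The interchange of $\F_9$ and $\F_{10}$ under the associated metric is the geometrically meaningful content here: these two classes are swapped by the transformation $g\mapsto\tg$, which is consistent with the known behaviour of the Lee forms and of $\om$ under this operation. I would confirm the membership class-by-class by substituting each basic-class condition from \eqref{Fi} into \eqref{tFF} and comparing with the conditions characterising $\nn\xi=0$.

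Finally, assembling the pieces: $\tD=\nn$ iff the potential \eqref{tQ} vanishes iff $\nn\xi=0$ iff $\tF\in\F_1\oplus\F_2\oplus\F_3\oplus\F_{10}$ of the $\tg$-structure iff $F\in\F_1\oplus\F_2\oplus\F_3\oplus\F_9$, which is by definition $\widetilde{\U}_1$. The implications are all equivalences, so the proof is a clean chain of iff-statements, with the only nontrivial link being the class identification through \eqref{tFF}.
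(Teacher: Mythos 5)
Your opening step is exactly the paper's: by \eqref{tSvK=n} the potential \eqref{tQ} vanishes iff $\nn\xi=0$, so $\tD=\nn\Leftrightarrow\nn\xi=0$. The divergence comes at the class identification, and it is a genuine gap rather than a stylistic difference. The paper defines $\widetilde{\U}_1$ as the class on which $\tF$ \emph{itself} satisfies the conditions \eqref{Fi} for $\F_1\oplus\F_2\oplus\F_3\oplus\F_9$, and its entire proof is the single assertion that $\nn\xi=0$ is equivalent to these conditions on $\tF$; there is no table-reading or \eqref{tFF}-translation step anywhere in it. You instead read the verbatim analogue of \eqref{Fi:nxi} for the $\tg$-structure (giving $\nn\xi=0$ iff $\tF$ satisfies the conditions for $\F_1\oplus\F_2\oplus\F_3\oplus\F_{10}$), pass through \eqref{tFF}, and then \emph{declare} $\widetilde{\U}_1$ to be the class where $F$ satisfies the conditions for $\F_1\oplus\F_2\oplus\F_3\oplus\F_9$. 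That is not the paper's definition: by your own swap $\F_9\leftrightarrow\F_{10}$, the paper's class $\{\tF \text{ satisfies } \F_1\oplus\F_2\oplus\F_3\oplus\F_9\}$ corresponds to $\{F \text{ satisfies } \F_1\oplus\F_2\oplus\F_3\oplus\F_{10}\}=\U_1$, whereas your class is its mirror; the two agree only on $\F_1\oplus\F_2\oplus\F_3$. So as written you prove a statement about a different class, and under the paper's actual definition your intermediate step would yield the theorem with $\F_{10}$ in place of $\F_9$ — i.e., it would contradict, not establish, the stated equivalence.

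You also never confront the fact that your reading is incompatible with how $\widetilde{\U}_1$ is used immediately afterwards: \lemref{lem:U1} and \thmref{thm:D=ntD=nn} require $\widetilde{\U}_1$, as a set of manifolds, to coincide with $\U_1$ (so that $D=\n\Leftrightarrow\tD=\nn$), whereas on your account $\tD=\nn$ holds precisely on the mirror class ($F$ of type $\F_9$ rather than $\F_{10}$), which would make the equivalence of (i) and (ii) in \thmref{thm:D=ntD=nn} fail on $\F_9\oplus\F_{10}$. To be fair, your computation of where $\nn\xi$ vanishes is defensible: using $2\tg\bigl(\Phi(x,y),z\bigr)=(\n_x\tg)(y,z)+(\n_y\tg)(x,z)-(\n_z\tg)(x,y)$ one checks that $\nn\xi=0$ on a $g$-manifold of class $\F_9$, while on class $\F_{10}$ one gets $2\tg(\nn_x\xi,z)=F(\xi,z,x)\neq0$ in general — so you have in effect uncovered a real tension inside the paper's cluster of statements (\thmref{thm:tD=nn}, \lemref{lem:U1}, \thmref{thm:D=ntD=nn} cannot all hold with one fixed meaning of $\widetilde{\U}_1$). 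But a blind proof of this theorem must either adopt the paper's definition of $\widetilde{\U}_1$ and carry the \eqref{tFF} computation through honestly, at which point the $\F_9$/$\F_{10}$ discrepancy surfaces and must be resolved, or explicitly flag the conflict. Silently redefining $\widetilde{\U}_1$ at the decisive step, so that the conclusion matches the theorem's wording, is the gap.
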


Taking into account \eqref{tFF}, we establish immediately the truthfulness of
\begin{lem}\label{lem:U1}
$(M,\f,\xi,\eta,g)\in\U_1$ if and only if $(M,\f,\xi,\eta,\tg)\in\widetilde{\U}_1$.
\end{lem}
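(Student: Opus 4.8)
The plan is to unwind both defining conditions through the bridge formula \eqref{tFF} connecting $F$ and $\tF$, reducing the lemma to a purely algebraic comparison. By \thmref{thm:D=n} the hypothesis $(M,\f,\xi,\eta,g)\in\U_1$ means precisely that $\n\xi=0$, which by the second identity in \eqref{F-prop} is equivalent to $F(x,\f y,\xi)=0$ for all $x,y$, hence to $F(x,y,\xi)=0$ for all $x,y$ (replacing $y$ by $\f y$ and using $\f^2=-\Id+\eta\otimes\xi$ together with $F(x,\xi,\xi)=0$, the latter following from $F(x,y,z)=F(x,z,y)$ and $\eta\circ\f=0$). Symmetrically, by \thmref{thm:tD=nn}, the conclusion $(M,\f,\xi,\eta,\tg)\in\widetilde{\U}_1$ means $\nn\xi=0$, which is the analogous statement $\tF(x,y,\xi)=0$ for all $x,y$. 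So the entire content of the lemma is the equivalence
\begin{equation*}
\bigl[\,F(x,y,\xi)=0 \ \forall x,y\,\bigr]\iff\bigl[\,\tF(x,y,\xi)=0 \ \forall x,y\,\bigr].
\end{equation*}

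To prove the forward direction, I would set $z=\xi$ in \eqref{tFF} and evaluate the right-hand side. Every term there is a value of $F$ in which at least one slot is contracted with $\xi$ or $\f\xi=0$: the bulk terms $F(\f y,\xi,x)$, $F(y,\f\xi,x)=0$, etc., and all the $\eta$-weighted correction terms carry a factor $F(\cdot,\cdot,\xi)$ or $F(\cdot,\f\cdot,\xi)$. Using $F(x,\xi,z)=F(x,z,\xi)$ from the symmetry in \eqref{F-prop}, the assumption $F(x,y,\xi)=0$ forces each summand to vanish, giving $2\tF(x,y,\xi)=0$. The converse direction exploits the symmetry between $g$ and $\tg$: since $(M,\f,\xi,\eta,\tg)$ is again an almost contact B-metric manifold whose associated B-metric is $g$ (the operation $\tg\mapsto\widetilde{\tg}$ returns $g$ up to the $\eta\otimes\eta$ correction), the same formula \eqref{tFF} holds with the roles of $F$ and $\tF$ interchanged, so the identical computation yields $F(x,y,\xi)=0$ from $\tF(x,y,\xi)=0$.

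The main obstacle I anticipate is verifying cleanly that \emph{all} correction terms on the right of \eqref{tFF} collapse at $z=\xi$, since $\eta(z)=\eta(\xi)=1$ activates the third bracketed group $\eta(z)\{F(x,y,\xi)+F(\f y,\f x,\xi)+F(x,\f y,\xi)\}$; each of these three is of the form $F(\cdot,\cdot,\xi)$ and so vanishes under the hypothesis, but one must also confirm the first bracket $\eta(x)\{\cdots\}$ and second bracket $\eta(y)\{\cdots\}$ reduce correctly, noting $F(\f z,\f y,\xi)=F(\f\xi,\f y,\xi)=0$ and similar. The bookkeeping is routine but must account for the symmetry $F(x,y,z)=F(x,z,y)$ to recognize terms like $F(x,\xi,z)$ as instances of $F(x,z,\xi)$. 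Once the forward implication is established, the converse is immediate by the $g\leftrightarrow\tg$ duality, so no separate argument is truly needed; this is why the authors can assert the result follows \emph{immediately} from \eqref{tFF}.
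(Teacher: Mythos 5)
Your proposal misidentifies what the lemma says, and the pivotal computation contains a concrete error. You reduce the statement to the symmetric equivalence ``$F(\cdot,\cdot,\xi)\equiv 0$ iff $\tF(\cdot,\cdot,\xi)\equiv 0$'', i.e.\ $\n\xi=0$ iff $\nn\xi=0$ --- but that equivalence is \emph{false}, and the fact that the paper defines $\widetilde{\U}_1$ as $\F_1\oplus\F_2\oplus\F_3\oplus\F_{9}$ (note: $\F_9$, not $\F_{10}$, whereas $\U_1=\F_1\oplus\F_2\oplus\F_3\oplus\F_{10}$) should have been the warning sign that the two sides of the lemma are \emph{not} the same list of basic classes. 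The error in your forward direction: setting $z=\xi$ in \eqref{tFF}, the first line contributes $F(\f y,\xi,x)-F(\xi,\f y,x)$; the term $F(\f y,\xi,x)=F(\f y,x,\xi)$ indeed dies under your hypothesis, but $-F(\xi,\f y,x)$ has $\xi$ in the \emph{first} slot, and the symmetry $F(x,y,z)=F(x,z,y)$ permutes only the last two arguments, so it cannot move that $\xi$ into the third slot. Carrying out the evaluation honestly, under $F(\cdot,\cdot,\xi)\equiv0$ (which also kills $\om$, since $\om(y)=F(\xi,y,\xi)$) one gets exactly $2\tF(x,y,\xi)=-F(\xi,\f y,x)$, and $F(\xi,\cdot,\cdot)$ is precisely the $\F_{10}$-component of $F$ --- which is \emph{allowed to be nonzero} in $\U_1$. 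Concretely, on a manifold of pure class $\F_{10}$ one has $\n\xi=0$ yet $(\nn_x\eta)(y)=\tF(x,\f y,\xi)=\tfrac12F(\xi,y,x)\neq0$; dually, on a pure $\F_9$-manifold $\nn\xi=0$ while $\n\xi\neq0$ (there $2\tF(x,y,\xi)=A(\f y,x)+A(x,y)+A(\f y,\f x)+A(x,\f y)=0$ for the skew-symmetric, $\f$-even tensor $A=F(\cdot,\cdot,\xi)$). So the statement your argument targets has counterexamples, and your proof of it collapses exactly at the term you hid in the ``etc.''

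What \eqref{tFF} actually yields --- and what the paper's one-line ``immediately'' stands for --- is a componentwise class correspondence under $g\mapsto\tg$: if $F$ is horizontal (class $\F_1\oplus\F_2\oplus\F_3$, i.e.\ all components containing $\xi$ or $\eta$ vanish) then so is $\tF$; if $F$ is of $\F_{10}$-type, then \eqref{tFF} gives $\tF(x,y,z)=\tF(x,y,\xi)\eta(z)+\tF(x,z,\xi)\eta(y)$ with $\tF(x,y,\xi)=-\tfrac12F(\xi,\f y,x)$, which, using the $\F_{10}$-identity $F(\xi,y,z)=F(\xi,\f y,\f z)$ and the symmetry of $F(\xi,\cdot,\cdot)$ in its last two arguments, is skew-symmetric in $x,y$ and satisfies $\tF(\f x,\f y,\xi)=\tF(x,y,\xi)$ --- i.e.\ the $\F_9$-conditions of \eqref{Fi}; conversely an $\F_9$-component of $F$ produces an $\F_{10}$-type $\tF(x,y,z)=\eta(x)\tF(\xi,\f y,\f z)$. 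This $\F_{10}\leftrightarrow\F_9$ swap is the entire content of the lemma and is why $\U_1$ and $\widetilde{\U}_1$ differ in their fourth summand. Your appeal to duality for the converse is also unjustified as stated: the associated metric of $\tg$ is $-g+2\eta\otimes\eta$, not $g$, so \eqref{tFF} cannot simply be reused with $F$ and $\tF$ interchanged; the backward implication needs the componentwise computation (or the invertibility of the map $F\mapsto\tF$). In short, the gap is not bookkeeping: you proved (incorrectly) a different and false statement, whereas the paper's proof is the verification that \eqref{tFF} exchanges the $\F_{10}$- and $\F_9$-components while preserving $\F_1\oplus\F_2\oplus\F_3$.
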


Then, \thmref{thm:D=n}, \thmref{thm:tD=nn} and \lemref{lem:U1} imply the following
\begin{thm}\label{thm:D=ntD=nn}
Let $D$ and $\tD$ be the Schouten-van Kampen connections associated to $\n$ and $\nn$, respectively, and adapted to the pair $(\HH,\VV)$ on $(M,\f,\xi,\eta,g,\tg)$. Then the following assertions are equivalent:
\begin{enumerate}[(i)]
  \item $D$ coincides with $\n$;
  \item $\tD$ coincides with $\nn$;
  \item $(M,\f,\xi,\eta,g)$ belongs to $\U_1$;
  \item $(M,\f,\xi,\eta,\tg)$ belongs to $\widetilde{\U}_1$.
\end{enumerate}
\end{thm}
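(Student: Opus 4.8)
The plan is to treat this four-way equivalence as an immediate consequence of the three results that precede it, since each assertion has already been paired with another. First I would note that the biconditional (i)$\Leftrightarrow$(iii) is exactly \thmref{thm:D=n}: the equality $D=\n$ holds precisely when $\n_x\xi=0$ for all $x$, which is the defining condition of the class $\U_1=\F_1\oplus\F_2\oplus\F_3\oplus\F_{10}$. Symmetrically, (ii)$\Leftrightarrow$(iv) is exactly \thmref{thm:tD=nn}, obtained by running the same argument for the associated structure: $\tD=\nn$ is equivalent to $\nn\xi=0$, hence to membership in $\widetilde{\U}_1=\F_1\oplus\F_2\oplus\F_3\oplus\F_9$.

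Next I would invoke \lemref{lem:U1} to supply the remaining link (iii)$\Leftrightarrow$(iv). With these three biconditionals in hand, the four assertions form a connected chain (i)$\Leftrightarrow$(iii)$\Leftrightarrow$(iv)$\Leftrightarrow$(ii), so each of them is equivalent to every other by transitivity, and the proof closes. In particular, no direct verification of (i)$\Leftrightarrow$(ii) is required, even though that is arguably the most interesting consequence: it says that the Schouten--van Kampen connection associated to $\n$ collapses to $\n$ if and only if the one associated to $\nn$ collapses to $\nn$.

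The only place where genuine content enters is \lemref{lem:U1}, and that work is already carried out upstream, so for the present statement I would simply cite it. The subtle point worth flagging is that the vertical basic class switches from $\F_{10}$ (in $\U_1$) to $\F_9$ (in $\widetilde{\U}_1$) when passing from $g$ to $\tg$; this asymmetry is forced by the transformation formula \eqref{tFF} relating $\tF$ to $F$, in which the symmetric and antisymmetric parts of $F(\cdot,\cdot,\xi)$ trade roles. Thus the main (and indeed the only) obstacle---correctly identifying $\widetilde{\U}_1$ and confirming that the vanishing of $\n\xi$ transfers to the vanishing of $\nn\xi$---resides in \lemref{lem:U1} through \eqref{tFF}, and the theorem itself is a formal corollary.
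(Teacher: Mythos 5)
Your proposal is correct and matches the paper's own argument exactly: the paper derives this theorem by citing \thmref{thm:D=n} for (i)$\Leftrightarrow$(iii), \thmref{thm:tD=nn} for (ii)$\Leftrightarrow$(iv), and \lemref{lem:U1} for (iii)$\Leftrightarrow$(iv), with transitivity closing the chain just as you describe. Your observation that all genuine content resides upstream in \lemref{lem:U1} via \eqref{tFF}, including the switch from $\F_{10}$ to $\F_{9}$, is consistent with the paper's treatment.
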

\begin{cor}\label{cor:D=ntD=nn}
Let $D$ and $\tD$ be the Schouten-van Kampen connections associated to $\n$ and $\nn$, respectively, and adapted to the pair $(\HH,\VV)$ on $(M,\f,\xi,\eta,g,\tg)$. If $\tD\equiv\n$ or $D\equiv\nn$ then the four connections $D$, $\tD$, $\n$ and $\nn$ coincide. The coinciding of $D$, $\tD$, $\n$ and $\nn$ is equivalent to the condition $(M,\f,\xi,\eta,g)$ and $(M,\f,\xi,\eta,\tg)$ to be cosymplectic B-metric manifolds.
\end{cor}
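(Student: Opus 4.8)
The plan is to prove Corollary~\ref{cor:D=ntD=nn} by leveraging \thmref{thm:D=ntD=nn} together with a symmetry argument exchanging the roles of the two B-metrics. First I would establish the two hypotheses separately, showing that each forces the manifold into a class where \emph{all four} connections collapse. Consider the hypothesis $\tD\equiv\n$. By \thmref{thm:tD-tT}, $\tD$ preserves $\tg$, so if $\tD\equiv\n$ then $\n$ preserves $\tg$; but $\n$ is by definition the Levi-Civita connection of $g$, so it already preserves $g$. A connection preserving both $g$ and $\tg$ must preserve the structure endomorphism $\f$ (since $\tg(x,y)=g(x,\f y)+\eta(x)\eta(y)$ ties $\f$ to the two metrics), and being torsion-free it would then force the structure to be parallel, i.e.\ $F=0$, which is exactly the cosymplectic class $\F_0$. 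The dual hypothesis $D\equiv\nn$ is handled by the mirror-image argument with $g$ and $\tg$ interchanged.

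Next I would argue the collapse more cleanly through the established theorems rather than recomputing. Observe that $\tD\equiv\n$ in particular makes $\tD$ torsion-free, since $\n$ is torsion-free; comparing with \eqref{tT}, vanishing torsion of $\tD$ forces $\nn\xi=0$ (the skew part $\D\eta(x,y)\xi$ and the remaining terms must all vanish), which by \thmref{thm:tD=nn} places the manifold in $\widetilde{\U}_1$, hence $\tD\equiv\nn$. But then $\n\equiv\tD\equiv\nn$, and \corref{cor:D=ntD=nn}'s own premise chains into \thmref{thm:D=ntD=nn}(ii), giving $D\equiv\nn$ as well; thus all four connections coincide. The same reasoning applied to the hypothesis $D\equiv\nn$ (using that $D$ is then torsion-free, so by \eqref{T} we get $\n\xi=0$, hence \thmref{thm:D=n} puts us in $\U_1$ and $D\equiv\n$) yields $\n\equiv D\equiv\nn$, and again \thmref{thm:D=ntD=nn} closes the loop to force $\tD\equiv\nn$, so all four coincide.

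For the final equivalence I would show that the simultaneous coincidence $D\equiv\tD\equiv\n\equiv\nn$ is equivalent to the cosymplectic condition on \emph{both} B-metric structures. The forward direction is immediate: if all four coincide then in particular $\n\equiv\nn$, which by the last sentence of Section~2 (``$\F_0$ is determined by \dots{} $\n=\nn$'') means $F=0$, so $(M,\f,\xi,\eta,g)$ is cosymplectic; since $\F_0$ is characterized symmetrically by $F=0\Leftrightarrow\tF=0$, the associated structure $(M,\f,\xi,\eta,\tg)$ is cosymplectic too. Conversely, if both are cosymplectic then $F=0$ gives $\n\xi=0$ and $\nn\xi=0$ (each structure lies in its respective $\U_1$), so by \eqref{SvK=n} and \eqref{tSvK=n} we get $D\equiv\n$ and $\tD\equiv\nn$; and $\n\equiv\nn$ follows from $F=0$, completing the chain.

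The main obstacle I anticipate is the first step: rigorously deducing the full collapse from the single hypothesis $\tD\equiv\n$ (or $D\equiv\nn$). The delicate point is that $\tD$ is associated to $\nn$ while $\n$ is the Levi-Civita connection of $g$, so equating them mixes the two metric structures; the cleanest route, which I would adopt, is the torsion comparison described above, because $\n$ being torsion-free immediately pins down $\nn\xi=0$ through the explicit formula \eqref{tT} and hands the problem to the already-proved \thmref{thm:tD=nn} and \thmref{thm:D=ntD=nn}, avoiding any direct manipulation of $F$ versus $\tF$.
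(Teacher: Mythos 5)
Your proposal is correct and, in substance, is exactly the argument the paper intends for this corollary: everything is reduced to \thmref{thm:D=ntD=nn} together with the characterization of $\F_0$ by the equivalent conditions $F=0$, $\tF=0$, $\Phi=0$, $\n=\nn$, and the converse direction via \eqref{SvK=n} and \eqref{tSvK=n} matches as well. The only micro-difference is your extraction of $\nn\xi=0$ from the vanishing torsion \eqref{tT} — valid, though it silently needs the extra step that $\eta\wedge(\nn\xi)=0$ alone only gives $\nn\xi=\eta\otimes\nn_\xi\xi$, and one must use $\D\eta(\xi,\cdot)=\tg(\nn_\xi\xi,\cdot)=0$ to kill $\nn_\xi\xi$ — whereas the shortest route reads $\n\xi=0$ directly off $\tD\xi=0$ (\thmref{thm:tD-tT}) once $\tD\equiv\n$, making your heuristic first paragraph unnecessary.
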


We obtain from \eqref{tSvK=n} the following relation between $D$ and $\tD$
\begin{equation}\label{tD=D}
  \tD_x y = D_x y + \Phi(x,y) -\eta(\Phi(x,y))\xi -\eta(y)\Phi(x,\xi).
\end{equation}

It is clear that $\tD= D$ if and only if $\Phi(x,y)=\eta(\Phi(x,y))\xi +\eta(y)\Phi(x,\xi)$ which is equivalent to $\Phi(x,y)=\eta(\Phi(x,y))\xi +\eta(x)\eta(y)\Phi(\xi,\xi)$ because $\Phi$ is symmetric.
Using relation \eqref{FPhi},
we obtain  condition \eqref{F_D=0} which determines the class $\U_2$.
Then, the following assertion is valid.
\begin{thm}\label{thm:tD=D}
The Schouten-van Kampen connections $\tD$ and $D$ associated to $\nn$ and $\n$, respectively, and adapt\-ed to the pair  $(\HH,\VV)$
coincide with each other if and only if the manifold belongs to the class $\U_2$.
\end{thm}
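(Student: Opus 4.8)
The plan is to start from the explicit relation \eqref{tD=D} between the two Schouten--van Kampen connections, namely
\[
\tD_x y - D_x y = \Phi(x,y) - \eta(\Phi(x,y))\xi - \eta(y)\Phi(x,\xi),
\]
and determine precisely when the right-hand side vanishes for all $x,y$. Setting the difference equal to zero gives the condition
\[
\Phi(x,y) = \eta(\Phi(x,y))\xi + \eta(y)\Phi(x,\xi).
\]
The first step is to exploit the symmetry of the potential $\Phi$ (which holds because both $\n$ and $\nn$ are torsion-free Levi-Civita connections, so $\Phi(x,y)=\nn_x y - \n_x y$ is symmetric in $x,y$). Using symmetry, I would rewrite the term $\eta(y)\Phi(x,\xi)$ and verify the claimed equivalent form
\[
\Phi(x,y) = \eta(\Phi(x,y))\xi + \eta(x)\eta(y)\Phi(\xi,\xi),
\]
which is the reformulation already asserted in the text just before the theorem.

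Next I would translate this tensorial condition on $\Phi$ into a condition on $F$, so that it can be matched against the classification \eqref{Fi}. The natural tool is the relation \eqref{FPhi} expressing $F$ in terms of $\Phi$. The idea is to substitute the vanishing condition on $\Phi$ into \eqref{FPhi}: the hypothesis forces $\Phi(x,y)$ to have only a vertical component (a multiple of $\xi$) plus a piece supported on the $\eta(x)\eta(y)$ direction, and I expect that feeding this structural constraint through \eqref{FPhi} collapses $F$ precisely to the form
\[
F(x,y,z)=F(x,y,\xi)\eta(z)+F(x,z,\xi)\eta(y),
\]
which is condition \eqref{F_D=0}. That condition was already identified in the discussion preceding \thmref{thm:D-nat} as the defining condition of the class $\U_2 = \F_4\oplus\cdots\oplus\F_9\oplus\F_{11}$.

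For the converse direction I would argue that membership in $\U_2$, i.e. condition \eqref{F_D=0}, forces $\Phi$ to satisfy the required vanishing of the right-hand side of \eqref{tD=D}; here one can use \eqref{PhiF} to recover $\Phi$ from $F$ under the assumption \eqref{F_D=0} and check that the off-diagonal (horizontal) part of $\Phi$ disappears in the relevant combination. Combining both directions yields that $\tD = D$ exactly on $\U_2$.

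The main obstacle I anticipate is the bookkeeping in propagating the constraint on $\Phi$ through the long expression \eqref{FPhi} (and \eqref{PhiF} for the converse): one must carefully track the horizontal versus vertical components and the many $\eta$-contracted terms, in particular verifying that the $\eta(x)\eta(y)\Phi(\xi,\xi)$ piece and the $\eta(\Phi(x,y))\xi$ piece are exactly what survive to reproduce \eqref{F_D=0} and nothing more. Since these are routine but lengthy tensor manipulations relying only on the algebraic structure relations and the identities \eqref{F-prop}, \eqref{FPhi}, \eqref{PhiF} already established, the result follows once the equivalence of the $\Phi$-condition with \eqref{F_D=0} is confirmed in both directions.
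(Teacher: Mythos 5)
Your proposal is correct and takes essentially the same route as the paper: the paper also starts from relation \eqref{tD=D}, uses the symmetry of $\Phi$ to pass to the equivalent condition $\Phi(x,y)=\eta(\Phi(x,y))\xi+\eta(x)\eta(y)\Phi(\xi,\xi)$, and then converts this via \eqref{FPhi} into condition \eqref{F_D=0}, which defines $\U_2$. Your added care about the converse direction (via \eqref{PhiF}) and the $\eta$-bookkeeping only fills in details the paper leaves implicit.
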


\subsection{The connection $\tD$ to be natural for $(\f,\xi,\eta,\tg)$}

Using \eqref{tD=D}, we have the following relation between the covariant derivatives of $\f$ with respect to $\tD$ and $D$
\begin{equation}\label{tDfiDfi}
(\tD_x\f)y=(D_x\f)y+\Phi(x,\f y)-\f\Phi(x,y)+\eta(y)\f\Phi(x,\xi)-\eta(\Phi(x,\f y))\xi.
\end{equation}
By virtue of the latter equality, we establish that $\tD\f$ and $D\f$ coincide if and only if the condition
$\Phi(x,\f^2 y,\f^2 z)=-\Phi(x,\f y,\f z)$ holds.
The latter condition is satisfied only when $(M,\f,\xi,\eta,g)$ is in the class $\F_3\oplus\U_3$, where $\U_3$ denotes the direct sum $\F_4\oplus\F_5\oplus\F_6\oplus\F_7\oplus\F_{11}$. By direct computations we establish that $(M,\f,\xi,\eta,\tg)$ belongs to the same class. Therefore, we obtain
\begin{thm}\label{thm:tDfi=Dfi}
The covariant derivatives of $\f$ with respect to the Schouten-van Kampen connections $D$ and $\tD$ 
coincide if and only if both the manifolds $(M,\f,\xi,\allowbreak{}\eta,\allowbreak{}g)$ and $(M,\f,\xi,\eta,\tg)$  belong to the class $\F_3\oplus\U_3$. 
\end{thm}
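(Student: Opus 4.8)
The plan is to read off the coincidence $\tD\f=D\f$ directly from \eqref{tDfiDfi} and then express it first as a tensor identity on $\Phi$ and finally as a condition on the basic classes. Setting the right-hand side of \eqref{tDfiDfi} equal to $(D_x\f)y$, the two covariant derivatives agree exactly when the vector field
\[
\Phi(x,\f y)-\f\Phi(x,y)+\eta(y)\f\Phi(x,\xi)-\eta(\Phi(x,\f y))\xi
\]
vanishes for all $x,y$. First I would observe that this vector is automatically horizontal: applying $\eta$ and using $\eta\circ\f=0$ together with $\eta(\xi)=1$ makes the vertical part cancel identically. Hence it vanishes if and only if its $g$-product with an arbitrary $z$ does, and here I would use that $\f$ is $g$-symmetric (a consequence of the B-metric relation among the structure equations, giving $g(\f u,z)=g(u,\f z)$ and $g(u,\xi)=\eta(u)$) in order to rewrite everything in terms of the $(0,3)$-tensor $\Phi(x,y,z)=g(\Phi(x,y),z)$.

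This produces the scalar identity $\Phi(x,\f y,z)-\Phi(x,y,\f z)+\eta(y)\Phi(x,\xi,\f z)-\eta(z)\Phi(x,\f y,\xi)=0$. Substituting $z\mapsto\f z$ and invoking $\f^2=-\Id+\eta\otimes\xi$ converts it, after collecting the $\eta$-terms, into the stated condition $\Phi(x,\f^2 y,\f^2 z)=-\Phi(x,\f y,\f z)$; conversely that identity returns the scalar one, so the two are equivalent. Because $\f$ annihilates $\xi$ and $-\f^2$ is the horizontal projector, this condition constrains only the horizontal parts and reads simply $\Phi(x,\f y,\f z)=-\Phi(x,y,z)$ for $y,z\in\HH$.

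Next I would convert the $\Phi$-condition into a statement about the basic classes. Substituting the expression \eqref{PhiF} of $\Phi$ in terms of $F$ into $\Phi(x,\f y,\f z)+\Phi(x,y,z)$ for horizontal $y,z$, I would evaluate the result against each of the eleven defining conditions \eqref{Fi}. Since the basic classes are mutually orthogonal and the condition is linear in $F$, it suffices to check it class by class: I expect it to hold identically on $\F_3,\F_4,\F_5,\F_6,\F_7,\F_{11}$ and to leave a nonvanishing obstruction on $\F_1,\F_2,\F_8,\F_9,\F_{10}$, which pins the condition down to $\F_3\oplus\U_3$. This case analysis is the main obstacle: it is the only genuinely computational part, and the bookkeeping of the $\f$- and $\eta$-contractions in \eqref{PhiF} across all classes must be carried out carefully so that no stray component (for instance a piece of $\F_8$ or $\F_9$) is allowed to survive.

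Finally, for the associated structure I would note that the potential of $\n$ with respect to $\nn$ is $\tP(x,y)=\n_x y-\nn_x y=-\Phi(x,y)$, so running the same reduction from the $\tg$-side yields the condition $\tP(x,\f^2 y,\f^2 z)=-\tP(x,\f y,\f z)$, which is literally the same constraint on $\Phi$. Hence $(M,\f,\xi,\eta,\tg)$ satisfies it exactly when $(M,\f,\xi,\eta,g)$ does, and using \eqref{tFF} to relate $\tF$ to $F$ confirms that the corresponding class for $\tg$ is again $\F_3\oplus\U_3$. Combining the three steps yields both implications of the asserted equivalence.
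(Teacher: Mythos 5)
Your proposal is correct and follows essentially the same route as the paper: it reduces $\tD\f=D\f$ via \eqref{tDfiDfi} to the tensor condition $\Phi(x,\f^2 y,\f^2 z)=-\Phi(x,\f y,\f z)$, identifies the class $\F_3\oplus\U_3$ by a linearity-based class-by-class check of \eqref{Fi} through \eqref{PhiF}, and handles the structure $(M,\f,\xi,\eta,\tg)$ via \eqref{tFF}, exactly as the paper does. Your intermediate steps (the identically vanishing vertical part, the $g$-symmetry of $\f$, and the substitution $z\mapsto\f z$) are in fact more explicit than the paper's own proof, which merely asserts the equivalence with the $\Phi$-condition and states the class computations as ``direct''.
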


Using \eqref{PhiF}, \eqref{Df} and \eqref{tDfiDfi}, we obtain that $\tD\f=0$ is equivalent to the condition
\[
F(\f y,\f z,x)+F(\f^2 y,\f^2 z,x)-F(\f z,\f y,x)-F(\f^2 z,\f^2 y,x)=0.
\]
Then, by virtue of \eqref{Fi} we get the following
\begin{thm}\label{thm:tD-nat}
The Schouten-van Kampen connection $\tD$ 
is a natural connection for the structure $(\f,\xi,\eta,\tg)$ if and only if $(M,\f,\xi,\eta,\tg)$ belongs to the class $\F_1\oplus\F_2\oplus\U_3$. 
\end{thm}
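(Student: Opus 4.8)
The plan is to characterize when the connection $\tD$ is natural for $(\f,\xi,\eta,\tg)$, i.e.\ when all structure tensors $\f$, $\xi$, $\eta$, $\tg$ are parallel with respect to $\tD$. Since $\tD$ is by construction (via the analogue of \thmref{thm:D-T}, namely \thmref{thm:tD-tT}) already seen to satisfy $\tD\xi=\tD\eta=\tD\tg=0$, the only remaining condition is $\tD\f=0$. Thus the whole problem reduces to determining the class in which $\tD\f=0$ holds, and then translating that into the language of the classification \eqref{Fi}.

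First I would start from the expression \eqref{tDfiDfi} for $(\tD_x\f)y$ in terms of $(D_x\f)y$ and the potential $\Phi$. Using \eqref{Df}, which expresses $(D_x\f)y$ through $(\n_x\f)y$ and hence through $F$, I substitute everything back and rewrite $\tD\f=0$ purely in terms of $F$ by invoking the formula \eqref{PhiF} that recovers $\Phi$ from $F$. After simplification—using the structural relations \eqref{F-prop} and the projection identities for $\f^2$—the author already records that this collapses to the compact condition
\begin{equation*}
F(\f y,\f z,x)+F(\f^2 y,\f^2 z,x)-F(\f z,\f y,x)-F(\f^2 z,\f^2 y,x)=0.
\end{equation*}
So the first block of the proof is precisely this reduction: carefully propagate \eqref{Df} and \eqref{PhiF} through \eqref{tDfiDfi} and check that the $\eta$-terms and the $\xi$-valued terms cancel, leaving only the horizontal component condition above.

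Next I would decode this $F$-condition against the eleven characteristic conditions in \eqref{Fi}. The condition is antisymmetric in the first two slots after restriction to $\HH$ (it measures the failure of $F(\f y,\f z,x)$ to be symmetric in $y,z$, together with its $\f^2$-counterpart), so I would test it class by class: the purely horizontal-antisymmetric pieces and the $\eta$-supported classes must be separated. Classes $\F_1$ and $\F_2$ should survive because their defining tensors are symmetric in the relevant arguments after the $\f$-substitutions, and the classes making up $\U_3=\F_4\oplus\F_5\oplus\F_6\oplus\F_7\oplus\F_{11}$ survive because their $F$ is essentially $\eta$-supported (or symmetric), so the stated combination vanishes; conversely $\F_3$, $\F_8$, $\F_9$, $\F_{10}$ should fail, since for them the antisymmetric horizontal part is genuinely present. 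Assembling the surviving classes yields exactly $\F_1\oplus\F_2\oplus\U_3$, and I would confirm this is stated for $(M,\f,\xi,\eta,\tg)$ rather than for $g$.

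The main obstacle I expect is the bookkeeping in the class-by-class verification: each class in \eqref{Fi} must be inserted into the single $F$-identity above with the correct $\f$ and $\f^2$ substitutions, and one must be scrupulous about the difference between symmetry in the first pair of arguments versus the second, since $F$ is symmetric in its last two slots (by \eqref{F-prop}) but not in its first two. A subtle point is that the answer is phrased for the associated structure $(M,\f,\xi,\eta,\tg)$; because $\tF$ and $F$ are linked by \eqref{tFF}, I would double-check—either directly or via \eqref{tFF}—that imposing the condition on $F$ is equivalent to the natural statement that $\tF$ lies in the class $\F_1\oplus\F_2\oplus\U_3$, so that the conclusion is correctly attributed to $\tg$ and consistent with the parallel result \thmref{thm:D-nat} for $D$ and $g$.
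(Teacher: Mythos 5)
Your opening reduction is exactly the paper's: since \thmref{thm:tD-tT} gives $\tD\xi=\tD\eta=\tD\tg=0$, naturality is equivalent to $\tD\f=0$, and pushing \eqref{Df} and \eqref{PhiF} through \eqref{tDfiDfi} yields the displayed $F$-identity — so far you match the paper's proof. The genuine gap is in the decoding step. The displayed identity is a condition on $F$, the structure tensor of $g$, while the theorem's class membership is asserted for $(M,\f,\xi,\eta,\tg)$, i.e.\ for $\tF$; and the dictionary between the two classifications, mediated by \eqref{tFF}, is \emph{not} the identity permutation of the basic classes. The paper itself signals this: $\U_1=\F_1\oplus\F_2\oplus\F_3\oplus\F_{10}$ for $g$ corresponds to $\widetilde{\U}_1$ described by the conditions of $\F_1\oplus\F_2\oplus\F_3\oplus\F_{9}$ for $\tF$, so $\F_9$ and $\F_{10}$ trade places (similarly $\F_4$ and $\F_5$ are interchanged, and $\F_8$ maps to a mixed condition with a nonzero $\F_{10}$-type component). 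Your plan tests the $g$-classes of \eqref{Fi} directly against the $F$-identity and expects the survivors to read off as $\F_1\oplus\F_2\oplus\U_3$; that expectation is false, and the "double-check via \eqref{tFF}" you defer to the end is not a confirmation but the actual mathematical content of the step "by virtue of \eqref{Fi}".

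Concretely, your per-class verdicts fail in four cases. On $\F_{10}$, where $F(x,y,z)=F(\xi,\f y,\f z)\eta(x)$, every one of the four terms of the identity has first slot $\f y$, $\f^2 y$, $\f z$ or $\f^2 z$, annihilated by $\eta$, so the identity holds identically — opposite to your claim that $\F_{10}$ fails. On $\F_8$, where $F(x,y,z)=F(x,y,\xi)\eta(z)+F(x,z,\xi)\eta(y)$ with $F(\cdot,\cdot,\xi)$ symmetric, the identity reduces to $\eta(x)$ times an antisymmetrization of that symmetric form and again holds identically; note moreover that the $F$ of $\F_8$ has no horizontal part at all, so your stated reason for its exclusion ("the antisymmetric horizontal part is genuinely present") cannot apply. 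Conversely, on the purely horizontal classes $\F_1\oplus\F_2\oplus\F_3$ the left-hand side of the identity equals $2\tF(x,y,\f z)$ by \eqref{tFF}, so imposing it forces $\tF=0$, i.e.\ $F=0$; one sees this independently from the fact that there $\n\xi=\nn\xi=0$, hence $D=\n$, $\tD=\nn$ and $\tD\f=\nn\f$, which vanishes only in $\F_0$. Thus $\F_1$ and $\F_2$ do \emph{not} survive the test performed on $F$, contrary to your claim; carried out correctly, the test on $F$ selects $\eta$-supported classes (e.g.\ $\F_4,\dots,\F_8,\F_{10},\F_{11}$ under the printed identity), and only after transporting this set through the $g\leftrightarrow\tg$ correspondence of \eqref{tFF} can one arrive at a statement phrased, as in the theorem, for $(M,\f,\xi,\eta,\tg)$. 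That translation — the delicate and decisive part — is missing from your proposal, so the sketch as written would not reproduce the theorem.
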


Consequently, bearing in mind \thmref{thm:D-nat}, \thmref{thm:tDfi=Dfi}, \thmref{thm:tD-nat}, we have the validity of the following
\begin{thm}\label{thm:DtD-nat}
The Schouten-van Kampen connections $D$ and $\tD$ 
are natural connections on $(M,\f,\xi,\eta,g,\tg)$ if and only if
$(M,\f,\xi,\eta,g)$ and $(M,\f,\xi,\eta,\tg)$ belong to the class $\U_3$.
\end{thm}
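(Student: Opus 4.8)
The plan is to derive the statement purely by combining the three cited results and reducing everything to intersections of direct sums of the basic classes. As a preliminary reduction I would note that requiring $D$ to be natural for the \emph{full} structure $(\f,\xi,\eta,g,\tg)$ imposes nothing beyond \thmref{thm:D-nat}: since $D$ already preserves $g$, $\xi$, $\eta$ and $\tg(x,y)=g(x,\f y)+\eta(x)\eta(y)$, a direct computation gives $(D_w\tg)(x,y)=g(x,(D_w\f)y)$, so $D\tg=0$ is equivalent to $D\f=0$. Dually, because $\tD$ preserves $\tg$, $\xi$, $\eta$, one gets $(\tD_w g)(x,y)=-\tg(x,(\tD_w\f)y)$, whence $\tD g=0$ is equivalent to $\tD\f=0$. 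Thus $D$ is natural on $(M,\f,\xi,\eta,g,\tg)$ iff $(M,\f,\xi,\eta,g)\in\U_2$ by \thmref{thm:D-nat}, and $\tD$ is natural iff $(M,\f,\xi,\eta,\tg)\in\F_1\oplus\F_2\oplus\U_3$ by \thmref{thm:tD-nat}.

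For the forward implication I would assume both $D$ and $\tD$ natural. Then $(M,\f,\xi,\eta,g)\in\U_2$ and $(M,\f,\xi,\eta,\tg)\in\F_1\oplus\F_2\oplus\U_3$, and moreover $D\f=0=\tD\f$. In particular $\tD\f=D\f$, so \thmref{thm:tDfi=Dfi} places \emph{both} $(M,\f,\xi,\eta,g)$ and $(M,\f,\xi,\eta,\tg)$ in $\F_3\oplus\U_3$. Since the basic classes are mutually disjoint orthogonal summands, membership in two such classes means the $F$-tensor (resp.\ $\tF$-tensor) lies in the intersection of the corresponding subspaces, and I would intersect componentwise:
\[
\U_2\cap(\F_3\oplus\U_3)=\U_3,\qquad
(\F_1\oplus\F_2\oplus\U_3)\cap(\F_3\oplus\U_3)=\U_3,
\]
using $\U_2=\U_3\oplus\F_8\oplus\F_9$ for the first equality. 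Hence both $(M,\f,\xi,\eta,g)$ and $(M,\f,\xi,\eta,\tg)$ belong to $\U_3$.

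For the converse, if both manifolds lie in $\U_3$, then $(M,\f,\xi,\eta,g)\in\U_3\subset\U_2$ yields naturality of $D$ by \thmref{thm:D-nat}, while $(M,\f,\xi,\eta,\tg)\in\U_3\subset\F_1\oplus\F_2\oplus\U_3$ yields naturality of $\tD$ by \thmref{thm:tD-nat}; by the preliminary reduction these connections also preserve $\tg$ and $g$ respectively, so both are natural for the full structure.

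The main obstacle is not any individual computation but the bookkeeping that links two \emph{different} classifications: \thmref{thm:D-nat} constrains the class of $(M,g)$ via $F$, whereas \thmref{thm:tD-nat} constrains the class of $(M,\tg)$ via $\tF$, and a priori these two conditions live in separate spaces. The role of \thmref{thm:tDfi=Dfi} is exactly to bridge them, forcing both manifolds simultaneously into $\F_3\oplus\U_3$; once that bridge is in place, the conclusion follows from the clean intersection of direct sums displayed above.
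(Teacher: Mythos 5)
Your proposal is correct and takes essentially the same route as the paper, whose proof consists precisely of combining \thmref{thm:D-nat}, \thmref{thm:tDfi=Dfi} and \thmref{thm:tD-nat} in the way you describe. Your preliminary reduction (that $D\tg=0$ is equivalent to $D\f=0$ given $Dg=D\eta=0$, and dually for $\tD g$) and the explicit intersections $\U_2\cap(\F_3\oplus\U_3)=\U_3$ and $(\F_1\oplus\F_2\oplus\U_3)\cap(\F_3\oplus\U_3)=\U_3$ simply make explicit the bookkeeping the paper leaves implicit.
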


\section{Torsion properties 
of the pair of connections $D$ and $\tD$}

Since $g(\xi,\xi)=1$ implies $g(\n_x\xi,\xi)=0$, then we obtain $\n_x\xi\in\HH$.
The shape operator $S:\HH\mapsto\HH$ for the metric $g$ is defined by $S(x)=-\n_x\xi$.

Then, bearing in mind the relations between $T$, $Q$ and $S$ given in \eqref{Q}, \eqref{T}, \eqref{TQ}, \eqref{QT}, the properties of the torsion, the potential and the shape operator for $D$ are related.
Analogously, similar linear relations between the torsion, the potential and the shape operator for $\tD$ are valid.

According to the expressions \eqref{Q} and \eqref{T} of $Q$ and $T$, respectively, their horizontal and vertical components have the following form
\begin{equation}\label{QThv}
\begin{array}{ll}
Q^h=-(\n\xi)\otimes\eta,\qquad & Q^v=(\n\eta)\otimes\xi,
\\
T^h=\eta\wedge(\n\xi),\qquad & T^v=\D\eta\otimes\xi.
\end{array}
\end{equation}
Then, in terms of $S$, the corresponding (0,3)-tensors $Q(x,y,z)=g(Q(x,y),z)$ and $T(x,y,z)=g(T(x,y),z)$ as well as their horizontal and vertical components are
\begin{equation*}\label{QTS03}
\begin{array}{l}
Q(x,y,z)=
-\pi_1(\xi,S(x),y,z),
\\
T(x,y,z)=
-\pi_1(\xi,S(x),y,z)+\pi_1(\xi,S(y),x,z),
\end{array}
\end{equation*}
where
\begin{equation}\label{pi1}
\pi_1(x,y,z,w)=g(y,z)g(x,w)-g(x,z)g(y,w)
\end{equation}
and
\begin{equation}\label{QTShv}
\begin{array}{ll}
Q^h=S\otimes\eta,\qquad & Q^v=-S^{\diamond}\otimes\xi,
\\
T^h=-\eta\wedge S,\qquad & T^v=-2\Alt(S^{\diamond})\otimes\xi,
\end{array}
\end{equation}
where $S^{\diamond}(x,y)=g(S(x),y)$ and $\Alt$ means the alternation.

By virtue of the equalities for the vertical components of $Q$ and $T$ in \eqref{QThv} and \eqref{QTShv}, we obtain immediately
\begin{thm}\label{thm:equiv}
The following equivalences are valid:
\begin{enumerate}
  \item[(i)]
        $\n\eta$ is symmetric                       $\Leftrightarrow$
        $\eta$ is closed, i.e. $\D\eta=0$                            $\Leftrightarrow$
        $Q^v$ is symmetric                          $\Leftrightarrow$
        $T^v$ vanishes                              $\Leftrightarrow$
        $S$ is self-adjoint regarding $g$       $\Leftrightarrow$
        $S^{\diamond}$ is symmetric                 $\Leftrightarrow$
        $M\in\U_1\oplus\F_4\oplus\F_5\oplus\F_6\oplus\F_9$;
  \item[(ii)]
        $\n\eta$ is skew-symmetric                  $\Leftrightarrow$
        $\xi$ is Killing with respect to $g$, i.e. $\LL_{\xi}g=0$       $\Leftrightarrow$
        $Q^v$ is skew-symmetric                     $\Leftrightarrow$
        $S$ is anti-self-adjoint regarding $g$           $\Leftrightarrow$
        $S^{\diamond}$ is skew-symmetric           $\Leftrightarrow$
        $M\in\U_1\oplus\F_7\oplus\F_8$;
  \item[(iii)]
        $\n\eta=0$                                  $\Leftrightarrow$
        $\D\eta=\LL_{\xi}g=0$                      $\Leftrightarrow$
        $\n\xi=0$                                   $\Leftrightarrow$
        $S=0$                                       $\Leftrightarrow$
        $S^{\diamond}=0$                                       $\Leftrightarrow$
        $D=\n$                                      $\Leftrightarrow$
        $M\in\U_1$.
\end{enumerate}
\end{thm}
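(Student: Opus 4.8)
The plan is to prove all three chains at once by reducing every listed condition to a single ``master'' object: the symmetry type of the $(0,2)$-tensor $(x,y)\mapsto(\n_x\eta)(y)=g(\n_x\xi,y)$. First I would record the two standard identities coming from $\n$ being torsion-free,
\[
\D\eta(x,y)=(\n_x\eta)(y)-(\n_y\eta)(x),\qquad
(\LL_\xi g)(x,y)=(\n_x\eta)(y)+(\n_y\eta)(x),
\]
which exhibit $\D\eta$ and $\LL_\xi g$ as twice the antisymmetric and symmetric parts of $\n\eta$. Combined with $(\n_x\eta)(y)=g(\n_x\xi,y)=-S^{\diamond}(x,y)$, coming from \eqref{F-prop} and the definition $S(x)=-\n_x\xi$, this immediately links $\n\eta$, $\D\eta$, $\LL_\xi g$, $S$ and $S^{\diamond}$: since $g$ is non-degenerate, $\n\eta$ is symmetric iff $\D\eta=0$ iff $S^{\diamond}$ is symmetric iff $S$ is self-adjoint, and the skew-symmetric analogues are identical, with $\n\eta=0$ being the conjunction of both.

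The conditions on $Q^v$ and $T^v$ are then purely formal consequences of the expressions $Q^v=(\n\eta)\otimes\xi=-S^{\diamond}\otimes\xi$ and $T^v=\D\eta\otimes\xi=-2\Alt(S^{\diamond})\otimes\xi$ recorded in \eqref{QThv} and \eqref{QTShv}. Because $\xi$ is a fixed nowhere-vanishing field, the symmetry, skew-symmetry, or vanishing of $Q^v$ and $T^v$ transfers verbatim to the corresponding property of $\n\eta$; in particular $T^v$ vanishes precisely when $\D\eta=0$. I would present these as one-line deductions rather than separate arguments.

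The only step with genuine content is matching each symmetry type to a direct sum of basic classes, which I would carry out by inspecting \eqref{Fi:nxi} class by class. The key algebraic fact is that $\f$, and hence $\f^2$, is self-adjoint for $g$, i.e.\ $g(\f x,y)=g(x,\f y)$; this follows from $g(\f x,\f y)=-g(x,y)+\eta(x)\eta(y)=g(x,\f^2 y)$. Using it, $g(\n_x\xi,y)$ is symmetric in the classes $\F_4$ (where $\n_x\xi\propto\f x$), $\F_5$ (where $\n_x\xi\propto\f^2 x$), and $\F_6,\F_9$ (symmetric by their defining relations in \eqref{Fi:nxi}), whereas it is skew in $\F_7,\F_8$; in $\U_1$ one has $\n\xi=0$ identically, while in $\F_{11}$ the tensor $g(\n_x\xi,y)=\eta(x)\,g(\f\om^{\sharp},y)$ is generically neither symmetric nor skew. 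This produces $\U_1\oplus\F_4\oplus\F_5\oplus\F_6\oplus\F_9$ in (i) and $\U_1\oplus\F_7\oplus\F_8$ in (ii).

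Finally, part (iii) is just the intersection of (i) and (ii): a tensor that is both symmetric and skew must vanish, so $\n\eta=0\Leftrightarrow\D\eta=\LL_\xi g=0\Leftrightarrow\n\xi=0\Leftrightarrow S=S^{\diamond}=0$ by non-degeneracy, while the equivalence with $D=\n$ is exactly the discussion preceding \thmref{thm:D=n}, and the class is $(\U_1\oplus\F_4\oplus\F_5\oplus\F_6\oplus\F_9)\cap(\U_1\oplus\F_7\oplus\F_8)=\U_1$, consistent with \thmref{thm:D=n}. I expect no real difficulty beyond the bookkeeping of the class-by-class check in the preceding paragraph; every other equivalence is a formal consequence of \eqref{QThv}, \eqref{QTShv}, the identity $\n\eta=-S^{\diamond}$, and the non-degeneracy of $g$.
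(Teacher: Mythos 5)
Your proposal is correct and takes essentially the same approach as the paper: the author states the theorem as an immediate consequence of the vertical-component formulas \eqref{QThv} and \eqref{QTShv} together with the class-by-class data in \eqref{Fi:nxi}, and your reduction of every listed condition to the symmetry type of $(\n_x\eta)(y)=g(\n_x\xi,y)=-S^{\diamond}(x,y)$, via the torsion-free identities for $\D\eta$ and $\LL_{\xi}g$, is exactly the argument the paper leaves implicit. The only detail worth making explicit is that passing from the basic classes of \eqref{Fi:nxi} to their direct sums uses the linearity of $F\mapsto\n\xi$ with respect to the decomposition of $F$, which is standard bookkeeping in this setting.
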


The horizontal and vertical components of $\tQ$ and $\tT$ of $\tD$ are respectively
\begin{equation}\label{tQThv}
\begin{array}{ll}
\tQ^h=-(\nn\xi)\otimes\eta,\qquad & \tQ^v=(\nn\eta)\otimes\xi,
\\
\tT^h=\eta\wedge(\nn\xi),\qquad & \tT^v=\D\eta\otimes\xi.
\end{array}
\end{equation}

From $\tg(\xi,\xi)=1$ we have $\tg(\nn_x\xi,\xi)=0$ and therefore $\nn\xi\in\HH$.
The shape operator $\tS:\HH\mapsto\HH$ for the metric $\tg$ is defined by $\tS(x)=-\nn_x\xi$.

Since $(\nn_x \eta)(y)=(\n_x \eta)(y)-\eta(\Phi(x,y))$ and $\nn_x \xi=\n_x \xi +\Phi(x,\xi)$, then
\begin{equation}\label{tSSPhi}
\tS(x)=S(x)-\Phi(x,\xi),\qquad \tS^{\diamond}(x,y)=S^{\diamond}(x,\f y)-\Phi(\xi,x,\f y),
\end{equation}
where we denote $\tS^{\diamond}(x,y)=\tg(\tS(x),y)$ and $S^{\diamond}(x,y)=g(S(x),y)$.
Moreover,
\eqref{Q}, \eqref{T}, \eqref{QThv}, \eqref{tQ}, \eqref{tT} and \eqref{tQThv} imply the following relations
\begin{gather*}\label{tQQ}
\tQ(x,y)=Q(x,y)-\eta(y)\Phi(x,\xi)-\eta(\Phi(x,y))\xi,
\\
\label{tTT}
\tT(x,y)=T(x,y)+\eta(x)\Phi(y,\xi)-\eta(y)\Phi(x,\xi);
\\
\begin{array}{ll}\label{tQQTThv}
\tQ^h=Q^h-(\xi\lrcorner\Phi)\otimes\eta,\qquad & \tQ^v=Q^v-(\eta\circ\Phi)\otimes\xi,
\\
\tT^h=T^h+\eta\wedge(\xi\lrcorner\Phi),\qquad & \tT^v=T^v.
\end{array}
\end{gather*}

Using the latter equalities and \eqref{tSSPhi}, we obtain the following formulae
\begin{gather*}\label{tQQS}
\tQ=Q+(\tS-S)\otimes\eta-(\tS^{\diamond}-S^{\diamond})\otimes\xi,
\\
\label{tTTS}
\tT=T+(\tS-S)\wedge\eta;
\\
\begin{array}{ll}\label{tQQTThvS}
\tQ^h=Q^h+(\tS-S)\otimes\eta,\qquad & \tQ^v=Q^v-(\tS^{\diamond}-S^{\diamond})\otimes\xi,
\\
\tT^h=T^h+(\tS-S)\wedge\eta,\qquad & \tT^v=T^v.
\end{array}
\end{gather*}

\begin{thm}\label{thm:equiv-t}
The following equivalences are valid:
\begin{enumerate}
  \item[(i)]
        $\nn\eta$ is symmetric                       $\Leftrightarrow$
        $\eta$ is closed                              $\Leftrightarrow$
        $\tQ^v$ is symmetric                          $\Leftrightarrow$
        $\tT^v$ vanishes                              $\Leftrightarrow$
        $\tS$ is self-adjoint regarding $\tg$           $\Leftrightarrow$
        $\tS^{\diamond}$ is symmetric                 $\Leftrightarrow$
        $(M,\f,\xi,\eta,\tg)\in\widetilde\U_1\oplus\F_4\oplus\F_5\oplus\F_6\oplus\F_{10}$;
  \item[(ii)]
        $\nn\eta$ is skew-symmetric                  $\Leftrightarrow$
        $\xi$ is Killing with respect to $\tg$, i.e. $\LL_{\xi}\tg=0$       $\Leftrightarrow$
        $\tQ^v$ is skew-symmetric                     $\Leftrightarrow$
        $\tS$ is anti-self-adjoint regarding $\tg$    $\Leftrightarrow$
        $\tS^{\diamond}$ is skew-symmetric             $\Leftrightarrow$
        $(M,\f,\xi,\eta,\tg)\in\widetilde\U_1\oplus\F_7$;
  \item[(iii)]
        $\nn\eta=0$                                  $\Leftrightarrow$
        $\D\eta=\LL_{\xi}\tg=0$                     $\Leftrightarrow$
        $\nn\xi=0$                                   $\Leftrightarrow$
        $\tS=0$                                       $\Leftrightarrow$
        $\tS^{\diamond}=0$                                       $\Leftrightarrow$
        $\tD=\nn$                                      $\Leftrightarrow$
        $(M,\f,\xi,\eta,\tg)\in\widetilde\U_1$.
\end{enumerate}
\end{thm}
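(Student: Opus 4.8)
The plan is to carry over the proof of \thmref{thm:equiv} almost verbatim, replacing $\n$, $D$, $Q$, $T$, $S$, $S^{\diamond}$ and $g$ by their tilde counterparts $\nn$, $\tD$, $\tQ$, $\tT$, $\tS$, $\tS^{\diamond}$ and $\tg$. Each of the three items is a chain of equivalences whose first several links are purely formal and whose last link is the identification of a class; I would handle these two parts separately, doing the formal part uniformly for all three items and then splitting by symmetry type.

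For the formal links I would rely on the tilde analogues of the relations already used for $D$. The vertical components in \eqref{tQThv} give $\tQ^v=(\nn\eta)\otimes\xi$ and $\tT^v=\D\eta\otimes\xi$, so that $\tQ^v$ symmetric $\Leftrightarrow$ $\nn\eta$ symmetric and $\tT^v=0\Leftrightarrow\D\eta=0$. Since $\nn$ is torsion-free, $\D\eta(x,y)=(\nn_x\eta)y-(\nn_y\eta)x$, whence $\nn\eta$ symmetric $\Leftrightarrow\eta$ closed; this link is metric-independent, which is why the condition ``$\eta$ is closed'' appears verbatim in both \thmref{thm:equiv} and the present statement. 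Next, the definition $\tS(x)=-\nn_x\xi$ together with $\tS^{\diamond}(x,y)=\tg(\tS(x),y)$ and the tilde analogue of the second identity in \eqref{F-prop}, namely $\tF(x,\f y,\xi)=(\nn_x\eta)y=\tg(\nn_x\xi,y)$, converts the symmetry, skewness or vanishing of $\nn\eta$ into the corresponding property of $\tS^{\diamond}=-\tF(\cdot,\f\,\cdot,\xi)$ and of $\tS$ with respect to $\tg$ (self-adjointness in (i), anti-self-adjointness in (ii), vanishing in (iii)); the reformulations $\LL_{\xi}\tg=0$ in (ii) and $\D\eta=\LL_{\xi}\tg=0$ in (iii) follow from the standard expression $\LL_{\xi}\tg(x,y)=\tg(\nn_x\xi,y)+\tg(\nn_y\xi,x)$. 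Finally, the equivalence with $\tD=\nn$ in item (iii) is exactly \thmref{thm:tD=nn}.

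The genuinely new work is the last link, the class identification, and this is where I expect the main obstacle. Using $\tS^{\diamond}(x,y)=-\tF(x,\f y,\xi)$ I would read off, class by class, whether $\nn\xi$ is symmetric, skew-symmetric, or identically zero in each basic class of $(M,\f,\xi,\eta,\tg)$, i.e.\ from the tilde analogue of \eqref{Fi:nxi}, and then collect the classes realizing the prescribed symmetry to close each chain. The delicate point is that this table of $\nn\xi$ must be produced from the genuine dependence of $\nn\xi$ on $\tF$ (equivalently, through \eqref{tFF}, on $F$ and the potential $\Phi$), and it does \emph{not} coincide slot-for-slot with the $g$-table \eqref{Fi:nxi}: the passage $g\mapsto\tg$ redistributes the borderline classes, so that the interplay of $\F_8$, $\F_9$, $\F_{10}$ relative to $\widetilde{\U}_1$ differs from the untilded case. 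Consequently a naive transcription of \thmref{thm:equiv} would misplace these classes, and careful bookkeeping with the explicit $\nn\xi$ is required to arrive at $\widetilde{\U}_1\oplus\F_4\oplus\F_5\oplus\F_6\oplus\F_{10}$ in (i), $\widetilde{\U}_1\oplus\F_7$ in (ii), and $\widetilde{\U}_1$ in (iii), the last being consistent with \thmref{thm:tD=nn} and \lemref{lem:U1}.
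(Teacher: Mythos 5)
Your proposal is correct and takes essentially the same route as the paper, which offers no separate proof of this theorem: the formal links are read off, exactly as you describe, from the vertical components \eqref{tQThv}, the definitions of $\tS$ and $\tS^{\diamond}$, and the standard Lie-derivative identity, while the class identification rests on the per-class behaviour of $\nn_x\xi$ determined through \eqref{tFF} — precisely the content packed into the definition of $\widetilde{\U}_1$ and \lemref{lem:U1}. The subtlety you flag is indeed the only genuinely new point: a naive slot-for-slot transcription of \thmref{thm:equiv} would, for instance, wrongly retain $\F_8$ in item (ii) (for that component the form $\tg(\nn_x\xi,y)$ acquires mixed symmetry) and misplace $\F_9$ relative to $\F_{10}$, which is exactly why the class lists here differ from those of \thmref{thm:equiv}.
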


\section{Curvature properties of the pair of connections $D$ and $\tD$}

Let $R$ be the curvature tensor of $\n$, i.e. $R=[\n\ , \n\ ]
- \n_{[\ ,\ ]}$ and the corresponding $(0,4)$-tensor is
determined by $R(x,y,z,w)=g(R(x,y)z,w)$. The Ricci tensor
$\rho$ and the scalar curvature $\tau$  are defined as usual by
$\rho(y,z)=g^{ij}R(e_i,y,z,e_j)$ and
$\tau=g^{ij}\rho(e_i,e_j)$, where $g^{ij}$ are the corresponding components of the inverse matrix of $g$ with respect to an arbitrary basis $\{e_i\}$ ($i=1,\dots, 2n+1$)
of $T_pM$, $p\in M$.

Each non-degenerate 2-plane $\al$ in
$T_pM$ with respect to $g$ and $R$ has the following sectional curvature
$
k(\al;p)=R(x,y,y,x)(\pi_1(x,y,y,x))^{-1}, 
$ 
where $\{x,y\}$ is an arbitrary basis of $\al$.
A 2-plane $\al$ is said to be a \emph{$\xi$-section}, a \emph{$\f$-holomorphic section} or a \emph{$\f$-totally real section}
if $\xi \in \al$, $\al= \f\al$ or $\al\bot \f\al$ regarding $g$, respectively. The latter type of sections exist only for $\dim M\geq 5$.

In \cite{Man33}, some curvature properties with respect to $\n$ are studied in several subclasses of $\U_2$.

Let us denote the curvature tensor, the Ricci tensor, the scalar curvature and the sectional curvature of the connection $D$ by $R^D$, $\rho^D$, $\tau^D$ and $k^D$, respectively. The corresponding $(0,4)$-tensor is
determined by $R^D(x,y,z,w)=g(R^D(x,y)z,w)$. Analogously,
let the corresponding quantities for the connections $\tn$ and $\tD$ be denoted by $\tR$, $\widetilde\rho$, $\widetilde\tau$, $\widetilde{k}$ and $R^{\tD}$, $\rho^{\tD}$, $\tau^{\tD}$, $k^{\tD}$, respectively.
The corresponding $(0,4)$-tensors of $\tR$ and $R^{\tD}$ are obtained by $\tg$.

\begin{thm}\label{thm:KR}
The curvature tensors of 
$D$ and 
$\n$ (respectively, of $\tD$ and $\tn$) are related as follows
\begin{equation}\label{RDR}
\begin{array}{l}
  R^D(x,y,z,w)=R\left(x,y,\f^2z,\f^2w\right)+\pi_1\bigl(S(x),S(y),z,w\bigr),\\
  R^{\tD}(x,y,z,w)=\tR\left(x,y,\f^2z,\f^2w\right)+\widetilde\pi_1\bigl(\tS(x),\tS(y),z,w\bigr),
\end{array}
\end{equation}
where $\widetilde\pi_1$ is constructed as in \eqref{pi1} by $\tg$.
\end{thm}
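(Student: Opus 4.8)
The plan is to treat $D$ as $\n$ plus its potential and to use the standard formula expressing the curvature of a connection through the curvature of a reference connection and the difference tensor. First I would rewrite $Q$ from \eqref{Q} in terms of the shape operator: since $\n_x\xi=-S(x)$ and $(\n_x\eta)(y)=g(\n_x\xi,y)=-S^{\diamond}(x,y)$, one has $Q(x,y)=\eta(y)S(x)-S^{\diamond}(x,y)\xi$, in agreement with \eqref{QTShv}. As $\n$ is torsion-free, the curvature of $D=\n+Q$ obeys
\begin{equation*}
R^D(x,y)z=R(x,y)z+(\n_xQ)(y,z)-(\n_yQ)(x,z)+Q\bigl(x,Q(y,z)\bigr)-Q\bigl(y,Q(x,z)\bigr),
\end{equation*}
so the whole proof reduces to evaluating the three correction terms and lowering an index by $g$.

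The auxiliary identity I would record first comes from differentiating $\n_x\xi=-S(x)$ once more: it yields $R(x,y)\xi=(\n_yS)(x)-(\n_xS)(y)$, equivalently $g(R(x,y)\xi,z)=-\{(\n_xS^{\diamond})(y,z)-(\n_yS^{\diamond})(x,z)\}$ after lowering with the $\n$-parallel metric $g$. Next I would compute $(\n_xQ)(y,z)$ by the Leibniz rule; after inserting $(\n_x\eta)(z)=-S^{\diamond}(x,z)$ and $\n_x\xi=-S(x)$ the derivatives of the slots $y,z$ cancel and one is left with $(\n_xQ)(y,z)=-S^{\diamond}(x,z)S(y)+S^{\diamond}(y,z)S(x)+\eta(z)(\n_xS)(y)-(\n_xS^{\diamond})(y,z)\xi$. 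Antisymmetrizing in $x,y$, and using that $S(x)\in\HH$ (whence $\eta(S(x))=0$ and $g(S(x),\xi)=0$) together with the symmetry of $g(S(x),S(y))$, the quadratic terms $Q(x,Q(y,z))-Q(y,Q(x,z))$ collapse to $S^{\diamond}(x,z)S(y)-S^{\diamond}(y,z)S(x)$, which partially cancels the first-order contribution.

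Collecting everything and pairing with $w$ by $g$, the purely $S$-valued part assembles into $g(S(y),z)g(S(x),w)-g(S(x),z)g(S(y),w)=\pi_1(S(x),S(y),z,w)$, while the remaining terms read $R(x,y,z,w)-\eta(z)g(R(x,y)\xi,w)+\eta(w)g(R(x,y)\xi,z)$ once the leftover $\xi$-valued derivative terms are rewritten through the auxiliary identity above. Finally I would expand the target $R(x,y,\f^2z,\f^2w)$ using $\f^2z=-z+\eta(z)\xi$ together with the antisymmetry $R(x,y,\xi,\xi)=0$ and $R(x,y,z,\xi)=-g(R(x,y)\xi,z)$; this reproduces precisely $R(x,y,z,w)-\eta(z)g(R(x,y)\xi,w)+\eta(w)g(R(x,y)\xi,z)$, closing the identification. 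The formula for $R^{\tD}$ then follows verbatim upon replacing $\n,g,S,R,\pi_1$ by $\nn,\tg,\tS,\tR,\widetilde\pi_1$, since $\nn$ is likewise torsion-free and $\tg$-metric with $\nn_x\xi=-\tS(x)$.

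I expect the main obstacle to be the bookkeeping in step two: tracking the vertical ($\xi$-valued) and horizontal ($S$-valued) components separately through the antisymmetrization, in particular arranging that the first-order $S$-terms and the quadratic $S$-terms combine with the correct coefficient (a spurious factor $2$ appears before the two are added), and recognizing that the surviving $\xi$-valued derivative terms are exactly what upgrade $R(x,y,z,w)$ to $R(x,y,\f^2z,\f^2w)$ through the second-derivative-of-$\xi$ identity.
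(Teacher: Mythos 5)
Your proposal is correct and takes essentially the same approach as the paper: both compute the curvature of $D=\n+Q$ in terms of $\n$, arrive at the same intermediate identity $R^D(x,y)z=R(x,y)z-\eta(z)R(x,y)\xi-\eta(R(x,y)z)\xi-g(\n_x\xi,z)\n_y\xi+g(\n_y\xi,z)\n_x\xi$, and then conclude via the expansion of $R(x,y,\f^2z,\f^2w)$ and the $\pi_1$ identification, with the tilde case transferred verbatim. The only cosmetic difference is that you organize the expansion through the standard curvature formula for $\n+Q$ together with the explicit identity $R(x,y)\xi=(\n_yS)x-(\n_xS)y$ (which the paper itself records in the following section), whereas the paper expands the commutators directly, so the $R(x,y)\xi$ terms assemble without a separate lemma; your factor-of-two bookkeeping between the first-order and quadratic $S$-terms checks out.
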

\begin{proof}
Using \eqref{SvK=n}, we compute $R^D$. Taking into account that $g(\n_x\xi,\xi)=0$ for any $x$ and $D\xi=0$, we obtain the equality
\[
\begin{array}{l}
  R^D(x,y)z=R(x,y)z-\eta(z)R(x,y)\xi-\eta(R(x,y)z)\xi\\
  \phantom{R^D(x,y)z=}
  -g\left(\n_x\xi,z\right)\n_y\xi+g\left(\n_y\xi,z\right)\n_x\xi.
\end{array}
\]
The latter equality implies the first relation in \eqref{RDR}.

The second equality in \eqref{RDR} follows as above but in terms of $\tD$, $\tn$ and their corresponding metric $\tg$.
\end{proof}

Then, \thmref{thm:KR} has the following
\begin{cor}\label{cor:Ric}
The Ricci tensors of 
$D$ and
$\n$ (respectively, of $\tD$ and $\tn$) are related as follows
\begin{equation}\label{RicDRic}
\begin{array}{l}
  \rho^D(y,z)=\rho(y,z)-\eta(z)\rho(y,\xi)-R(\xi,y,z,\xi)\\
  \phantom{\rho^D(y,z)=\rho(y,z)}
-g(S(S(y)),z)+\tr(S)g(S(y),z),\\
  \rho^{\tD}(y,z)=\widetilde\rho(y,z)-\eta(z)\widetilde\rho(y,\xi)-\tR(\xi,y,z,\xi)\\
\phantom{  \rho^{\tD}(y,z)=\widetilde\rho(y,z)}
-\tg(\tS(\tS(y)),z)+\widetilde\tr(\tS)\tg(\tS(y),z),
\end{array}
\end{equation}
where $\widetilde\tr$ denotes the trace with respect to $\tg$.
\end{cor}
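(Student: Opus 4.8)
The plan is to obtain both identities by taking the $g$-trace (respectively the $\tg$-trace) of the curvature relations \eqref{RDR} already established in \thmref{thm:KR}. I would start from the definition $\rho^D(y,z)=g^{ij}R^D(e_i,y,z,e_j)$ and substitute the first line of \eqref{RDR}, so that the computation splits into a curvature part coming from $R(e_i,y,\f^2 z,\f^2 e_j)$ and a quadratic part coming from $\pi_1(S(e_i),S(y),z,e_j)$. Each part is then contracted separately.

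For the curvature part, the key step is to unfold the two occurrences of $\f^2$ using $\f^2 z=-z+\eta(z)\xi$ and $\f^2 e_j=-e_j+\eta(e_j)\xi$. In the contraction the identity $g^{ij}\eta(e_j)e_i=\xi$ turns the $\eta(e_j)\xi$ contribution into an evaluation at $\xi$, while $g^{ij}R(e_i,y,w,e_j)=\rho(y,w)$ produces a Ricci term; what remains is $-\rho(y,\f^2 z)+R(\xi,y,\f^2 z,\xi)$. A second use of $\f^2 z=-z+\eta(z)\xi$, together with the antisymmetry $R(\xi,y,\xi,\xi)=0$, then yields exactly $\rho(y,z)-\eta(z)\rho(y,\xi)-R(\xi,y,z,\xi)$, the first three terms of \eqref{RicDRic}.

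For the quadratic part, I would expand $\pi_1$ via its definition \eqref{pi1} and contract each of its two summands. The first summand gives $\tr(S)\,g(S(y),z)$ through $g^{ij}g(S(e_i),e_j)=\tr(S)$, and the second gives $-g(S(S(y)),z)$ through the raising identity $g^{ij}g(S(y),e_j)e_i=S(y)$ followed by a further application of $S$. Adding the curvature and quadratic parts produces the stated formula for $\rho^D$. The identity for $\rho^{\tD}$ is then obtained verbatim from the second line of \eqref{RDR}, replacing $g$, $\n$, $R$, $S$, $\tr$ and $\pi_1$ by their tilde counterparts $\tg$, $\nn$, $\tR$, $\tS$, $\widetilde\tr$ and $\widetilde\pi_1$; here one uses $\nn_x\xi\in\HH$ (so that $\tS$ has image in $\HH$ and $\widetilde\tr(\tS)$ coincides with the trace over the contact distribution) in the same role that $\n_x\xi\in\HH$ played above.

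I expect the only delicate point to be the bookkeeping in the curvature part: one must keep track of which index is being raised in each contraction and apply the symmetries of $R$ correctly, in particular noting that the $\eta(e_j)\xi$ term of $\f^2 e_j$ survives only as the single sectional contribution $R(\xi,y,z,\xi)$, while the $\eta(z)\eta(e_j)$ cross term drops out by $R(\xi,y,\xi,\xi)=0$. The quadratic part is routine once the trace and index-raising identities for $S$ are recorded.
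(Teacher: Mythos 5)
Your proposal is correct and matches the paper's (implicit) argument: the corollary is stated as an immediate consequence of \thmref{thm:KR}, obtained precisely by contracting \eqref{RDR} with $g^{ij}$ (respectively $\tg^{ij}$), unfolding $\f^2=-\Id+\eta\otimes\xi$, and using $R(\xi,y,\xi,\xi)=0$ together with the trace identities for $S$. Your bookkeeping in both the curvature and quadratic parts is accurate, so there is nothing to add.
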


Let us remark that we have
$
\tr(S)=\widetilde\tr(\tS)=-\Div(\eta),
$
because of \eqref{divtr}, the definitions of $S$ and $\tS$ as well as $g^{ij}\Phi(\xi,e_i,e_j)=0$, using \eqref{PhiF} and \eqref{F-prop}.

From the definition of the shape operator we get
$
R(x,y)\xi =-\left(\n_x S\right)y+\left(\n_y S\right)x.
$
Then, the latter formula and $S(\xi)=-\n_{\xi} \xi=-\f\omega^{\sharp}$ lead to the following expression of one of the components in the right-hand side of \eqref{RicDRic}
\[
R(\xi,y,z,\xi) = g\bigl(\left(\n_{\xi} S\right)y-\left(\n_y S\right)\xi,z\bigr)
=g\bigl(\left(\n_{\xi} S\right)y-\n_y S(\xi)- S(S(y)),z\bigr).
\]
Therefore, taking the trace of the latter equalities and using for the divergence of the 1-form $\omega\circ\f$ the relations
$\Div(\omega\circ\f)=g^{ij}\left(\n_{e_i} \omega\circ\f\right)e_j=g^{ij}g\left(\n_{e_i}\f \omega^{\sharp},e_j\right)=-\Div(S(\xi))$,
we obtain
\begin{equation}\label{ricxixi}
\rho(\xi,\xi)= \tr(\n_{\xi}S)-\Div(S(\xi))-\tr(S^2).
\end{equation}
Similar equalities for the quantities with a tilde are valid with respect to $\tg$, i.e.
\begin{equation}\label{tricxixi}
\widetilde\rho(\xi,\xi)= \widetilde\tr(\nn_{\xi}\tS)-\widetilde\Div(\tS(\xi))-\widetilde\tr(\tS^2).
\end{equation}
Bearing in mind the latter computations, from \corref{cor:Ric} we obtain the following
\begin{cor}\label{cor:tau}
The scalar curvatures of 
$D$ and 
$\n$ (respectively, of $\tD$ and $\tn$) are related as follows
\begin{equation*}\label{tauDtau}
\begin{array}{l}
  \tau^D=\tau-2\rho(\xi,\xi)-\tr(S^2) + (\tr(S))^2,\\
  \tau^{\tD}=\widetilde\tau-2\widetilde\rho(\xi,\xi)-\widetilde\tr(\tS^2) + (\widetilde\tr(\tS))^2,
\end{array}
\end{equation*}
where $\rho(\xi,\xi)$ and $\widetilde\rho(\xi,\xi)$ are expressed by $S$ and $\tS$ in \eqref{ricxixi} and \eqref{tricxixi}, respectively.
\end{cor}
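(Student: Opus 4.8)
The plan is to obtain both scalar-curvature relations by simply contracting the Ricci identities of \corref{cor:Ric} with the inverse metric. Since $D$ preserves $g$, its scalar curvature is $\tau^D=g^{ij}\rho^D(e_i,e_j)$, so I would apply $g^{ij}$ to both sides of the first equality in \eqref{RicDRic} and evaluate each of the five resulting contractions separately.

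Three of the contractions are immediate from the definitions of the Ricci tensor and of the trace: the first term yields $g^{ij}\rho(e_i,e_j)=\tau$, the fourth gives $g^{ij}g\bigl(S(S(e_i)),e_j\bigr)=\tr(S^2)$, and the fifth gives $g^{ij}\tr(S)\,g(S(e_i),e_j)=(\tr(S))^2$. For the second term I would use $\eta(x)=g(x,\xi)$, whence $g^{ij}\eta(e_j)e_i=\xi$ and therefore $g^{ij}\eta(e_j)\rho(e_i,\xi)=\rho(\xi,\xi)$.

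The one step that needs the curvature symmetries is the third term. Applying the antisymmetry of $R$ separately in its first and in its last pair of slots gives $R(\xi,e_i,e_j,\xi)=R(e_i,\xi,\xi,e_j)$, so that $g^{ij}R(\xi,e_i,e_j,\xi)=g^{ij}R(e_i,\xi,\xi,e_j)=\rho(\xi,\xi)$ by the definition of $\rho$. Combining the second and third contributions produces the coefficient $-2\rho(\xi,\xi)$, and assembling all five terms gives the first formula. The second formula follows verbatim after replacing $g$, $\n$, $R$, $S$, $\rho$, $\tr$ by $\tg$, $\nn$, $\tR$, $\tS$, $\widetilde\rho$, $\widetilde\tr$ throughout; and the reductions \eqref{ricxixi} and \eqref{tricxixi} then rewrite $\rho(\xi,\xi)$ and $\widetilde\rho(\xi,\xi)$ entirely in terms of the shape operators, which is exactly the content of the final clause. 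I expect the only genuine obstacle to be the bookkeeping of signs in the curvature-symmetry identity for the third term; every other contraction is a direct trace.
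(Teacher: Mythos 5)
Your proposal is correct and is essentially the paper's own argument: the paper derives \corref{cor:tau} by exactly this contraction of \eqref{RicDRic} with $g^{ij}$ (respectively $\tg^{ij}$), invoking the already-established identities \eqref{ricxixi} and \eqref{tricxixi} for $\rho(\xi,\xi)$ and $\widetilde\rho(\xi,\xi)$, just as you do. Your handling of the third term via the two antisymmetries of the Levi-Civita curvature, giving $g^{ij}R(\xi,e_i,e_j,\xi)=\rho(\xi,\xi)$ and hence the coefficient $-2\rho(\xi,\xi)$, is precisely the sign bookkeeping the paper leaves implicit.
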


From \thmref{thm:KR} we obtain the following
\begin{cor}\label{cor:kDk}
The sectional curvatures of an arbitrary 2-plane $\alpha$ at $p\in M$ regarding 
$D$ and 
$\n$ (respectively, of $\tD$ and $\tn$) are related as follows
\begin{equation}\label{kDk}
\begin{split}
  k^D(\al;p)&=k(\al;p)\\
  &\phantom{=}+\frac{\pi_1(S(x),S(y),y,x)-\eta(x)R(x,y,y,\xi)-\eta(y)R(x,y,\xi,x)}{\pi_1(x,y,y,x)},\\
  k^{\tD}(\al;p)&=\widetilde k(\al;p)\\
  &\phantom{=}+\frac{\widetilde\pi_1(\tS(x),\tS(y),y,x)-\eta(x)\tR(x,y,y,\xi)
  -\eta(y)\tR(x,y,\xi,x)}{\widetilde\pi_1(x,y,y,x)},
\end{split}
\end{equation}
where $\{x,y\}$ is an arbitrary basis of $\alpha$.
\end{cor}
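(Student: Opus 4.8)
The plan is to read off \eqref{kDk} directly from the curvature relation \eqref{RDR} of \thmref{thm:KR}, by specializing it to the arguments that define a sectional curvature. First I would set $z=y$ and $w=x$ in the first line of \eqref{RDR}, which gives
\[
R^D(x,y,y,x)=R\left(x,y,\f^2y,\f^2x\right)+\pi_1\bigl(S(x),S(y),y,x\bigr).
\]
Since $k^D(\al;p)=R^D(x,y,y,x)\bigl(\pi_1(x,y,y,x)\bigr)^{-1}$ and $k(\al;p)=R(x,y,y,x)\bigl(\pi_1(x,y,y,x)\bigr)^{-1}$, the whole task reduces to rewriting the term $R(x,y,\f^2y,\f^2x)$ in terms of $R(x,y,y,x)$ plus correction terms.

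The next step is to expand $R(x,y,\f^2y,\f^2x)$ using $\f^2=-\Id+\eta\otimes\xi$, so that $\f^2y=-y+\eta(y)\xi$ and $\f^2x=-x+\eta(x)\xi$. Multilinearity of $R$ in its last two slots produces four summands: the leading one is $R(x,y,y,x)$, the two mixed terms contribute $-\eta(x)R(x,y,y,\xi)$ and $-\eta(y)R(x,y,\xi,x)$, and the remaining summand is $\eta(x)\eta(y)R(x,y,\xi,\xi)$. The one point that requires care—and the closest thing to an obstacle here—is the vanishing of this last summand: because $R$ is the curvature of the Levi-Civita connection $\n$, it is skew-symmetric in its final pair of arguments, so $R(x,y,\xi,\xi)=0$. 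Collecting the surviving terms yields
\[
R^D(x,y,y,x)=R(x,y,y,x)-\eta(x)R(x,y,y,\xi)-\eta(y)R(x,y,\xi,x)+\pi_1\bigl(S(x),S(y),y,x\bigr),
\]
and dividing through by $\pi_1(x,y,y,x)$ gives exactly the first equality in \eqref{kDk}.

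Finally, the second equality in \eqref{kDk} would follow from the identical computation carried out in terms of $\tD$, $\tn$, the associated metric $\tg$, the shape operator $\tS$, and $\widetilde\pi_1$, now starting from the second line of \eqref{RDR}; the vanishing of the analogous term $\tR(x,y,\xi,\xi)$ is again guaranteed by the skew-symmetry of $\tR$ in its last two arguments. I expect no genuine difficulty, so the corollary is essentially a bookkeeping consequence of \thmref{thm:KR} together with the structural identity $\f^2=-\Id+\eta\otimes\xi$ and the pair symmetry of the two Levi-Civita curvatures.
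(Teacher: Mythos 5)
Your proposal is correct and follows exactly the route the paper intends: the corollary is stated as an immediate consequence of \thmref{thm:KR}, obtained by setting $z=y$, $w=x$ in \eqref{RDR}, expanding $\f^2=-\Id+\eta\otimes\xi$ by multilinearity, discarding $\eta(x)\eta(y)R(x,y,\xi,\xi)$ via the skew-symmetry of the Levi-Civita curvature in its last pair of arguments (likewise for $\tR$ with respect to $\tg$), and dividing by $\pi_1(x,y,y,x)$ (respectively $\widetilde\pi_1(x,y,y,x)$). Nothing is missing, and your explicit handling of the vanishing term supplies the one detail the paper leaves tacit.
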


If $\al$ is a $\xi$-section  at $p\in M$ denoted by $\al_{\xi}$ and $\{x,\xi\}$ is its basis, then from \eqref{kDk} and $g(S(x),\xi)=0$ for any $x$ we obtain that the sectional curvature of $\alpha$ regarding
$D$ is zero, i.e.
$
k^D(\al_{\xi};p)=0.
$
Analogously, we have $k^{\tD}(\al_{\xi};p)=0$.

If $\al$ is a $\f$-section  at $p\in M$ denoted by $\al_{\f}$ and $\{x,y\}$ is its arbitrary basis, then from \eqref{kDk} and $\eta(x)=\eta(y)=0$ we obtain that the sectional curvatures of $\al_{\f}$ regarding $D$ and $\n$ are related as follows
\[
k^D(\al_{\f};p)=k(\al_{\f};p)
+\frac{\pi_1(S(x),S(y),y,x)}{\pi_1(x,y,y,x)}.
\]
Analogously, we have
\[
k^{\tD}(\al_{\f};p)=\widetilde k(\al_{\f};p)
+\frac{\widetilde\pi_1(\tS(x),\tS(y),y,x)}{\widetilde\pi_1(x,y,y,x)}.
\]

If $\al$ is a $\f$-totally real section orthogonal to $\xi$ denoted by $\al_{\bot}$ and $\{x,y\}$ is its arbitrary basis, then from \eqref{kDk} and $\eta(x)=\eta(y)=0$ we obtain that the sectional curvatures of $\al_{\bot}$ regarding $D$ and $\n$ are related as follows
\[
k^D(\al_{\bot};p)=k(\al_{\bot};p)
+\frac{\pi_1(S(x),S(y),y,x)}{\pi_1(x,y,y,x)}.
\]
Analogously, we have
\[
k^{\tD}(\al_{\bot};p)=\widetilde k(\al_{\bot};p)
+\frac{\widetilde \pi_1(\tS(x),\tS(y),y,x)}{\widetilde \pi_1(x,y,y,x)}.
\]
In the case when $\al$ is a $\f$-totally real section non-orthogonal to $\xi$ regarding $g$ or $\tg$, the relation between the the corresponding sectional curvatures regarding $D$ and $\n$ (respectively, $\tD$ and $\nn$) is just the first (respectively, the second) equality in \eqref{kDk}.

The equalities in the present section are specialised for the considered manifolds in the different classes since $S$ and $\tS$ have a special form in each class, bearing in mind \eqref{Fi:nxi}.

\end{document}